\newtheorem{theorem}{Theorem}
\newtheorem{lemma}[theorem]{Lemma}
\newtheorem{corollary}[theorem]{Corollary}
\begin{document}

\title{Judicious partitions of 3-uniform hypergraphs}
\author{John Haslegrave
\thanks{Trinity College, Cambridge CB2 1TQ, UK}
}

\maketitle

\begin{abstract}
The vertices of any graph with $m$ edges can be partitioned into two parts 
so that each part
meets at least $\frac{2m}{3}$ edges. Bollob\'as and Thomason conjectured 
that the vertices of
any $r$-uniform graph may be likewise partitioned into $r$ classes such 
that each part meets at
least $cm$ edges, with $c=\frac{r}{2r-1}$. In this paper, we prove this 
conjecture for the case $r=3$.
In the course of the proof we shall also prove an extension of the graph 
case which was conjectured by Bollob\'as and Scott.
\end{abstract}

\section{Introduction}
Given a graph $G$, it is easy to find a bipartition $V(G)=V_1\cup V_2$
such that at least half of the edges in $G$ join $V_1$ to $V_2$. It is
only slightly less trivial to find a bipartition $V_1\cup V_2$ such that 
each of $V_1$ and $V_2$ meets at least $2/3$ of the edges;
equivalently, each class in the bipartition contains at most $1/3$
of the edges (see, e.g., \cite{MGT}). (In fact, as $e(G) \to \infty$,
it is shown in \cite{BS99} that there is a bipartition in which each class
contains not much more than $1/4$ of the edges, which is trivially 
best possible by considering complete graphs.) 
These two ways of formulating the problem are equivalent for partitions into 
two parts, but give rise to
different generalisations for more parts. In this paper, we shall be 
concerned only with the problem of
meeting many edges; the problem of spanning few edges is addressed in 
\cite{BS99} 
for the graph case and \cite{BS97}  for the hypergraph case.

A particularly interesting case occurs when we partition the vertices
of an $r$-uniform hypergraph into $r$ classes, so that an edge may meet
every class. Bollob\'as and Thomason (see \cite{BRT93}, \cite{BS00}) 
conjectured that every $r$-uniform hypergraph with $m$ edges has an
$r$-partition in which each class meets at least $\frac{r}{2r-1}m$
edges. In \cite{BS00}, Bollob\'as and Scott prove a bound of
$0.27 m$; for $r=3$ they claim the better bound $(5m-1)/9$, but
there is a gap in their proof.

In this paper we prove the Bollob\'as--Thomason conjecture in the case 
$r=3$; we also prove a conjecture of Bollob\'as and Scott \cite{BS00}.

\section{Good partitions}\label{secmain}
Suppose we are given a 3-uniform hypergraph $G$ on vertex set $V$ with 
$m$ edges. For subsets 
$A, B, C$ of $V$, write $d(A)$ for the number of edges of $G$ meeting $A$, 
and $e(A,B,C)$ for the number of
(distinct) edges of the form $\{a,b,c\}$ with $a\in A$, $b\in B$ and $c\in C$.
Also, define  the {\em degree of $(A,B,C)$} as $d(A,B,C)=d(A)+d(B)+d(C)$.
Much of the time our triple $(A,B,C)$ will be a partition
of $V$. We shall sometimes abuse this 
notation by writing $a$ for $\{a\}$. Also, we write $d_2(A)$ for the number of 
edges meeting $A$ in at least 2 vertices. As a shorthand, we call
a partition of the vertex set $V$ a {\em partition of the graph $G$}.

For $0<\varepsilon < 2/3$, we call a set of vertices 
{\em $\varepsilon$-good}  if it 
meets at least $\left(\frac{2}{3}-\varepsilon\right)m$ of the edges;
otherwise we call it  {\em $\varepsilon$-bad}. As expected, we say that a set
is {\em minimal $\varepsilon$-good} if it is $\varepsilon$-good
and every proper subset of it is $\varepsilon$-bad.

We shall deduce our main theorem from the following
somewhat technical result.

\begin{theorem}\label{engine}
Let  $\varepsilon\ge\frac{1}{15}$, and let
$G$ be a 3-uniform hypergraph which cannot be partitioned into three
$\varepsilon$-good sets. Then there is a partition $V=A\cup B\cup C$
such that $A$ and $B$ are minimal $\varepsilon$-good sets and
\begin{equation}
d(A, B, C)>(2+3\varepsilon)m \label{engines}
\end{equation}
\end{theorem}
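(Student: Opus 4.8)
The plan is to first recast inequality~\eqref{engines} in terms of edge types. Given a partition $V=A\cup B\cup C$, call an edge \emph{rainbow} if it has exactly one vertex in each part, and \emph{monochromatic} if it lies inside a single part; write $r$ and $p$ for the numbers of such edges. Counting each edge once for every part it meets gives $d(A,B,C)=2m+r-p$, so \eqref{engines} is equivalent to $r-p>3\varepsilon m$. Two elementary observations will be used throughout: every edge meets $V$, so $V$ is $\varepsilon$-good and hence minimal $\varepsilon$-good sets exist; and, since $G$ has no partition into three $\varepsilon$-good sets, whenever $A,B$ are disjoint $\varepsilon$-good sets the complement $V\setminus(A\cup B)$ must be $\varepsilon$-bad.

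Next I would exhibit a disjoint pair of minimal $\varepsilon$-good sets. Take any minimal $\varepsilon$-good $A$; if $V\setminus A$ is $\varepsilon$-good, choose a minimal $\varepsilon$-good $B\subseteq V\setminus A$, and then $C:=V\setminus(A\cup B)$ is automatically $\varepsilon$-bad by the remark above. The delicate case is when $V\setminus A$ is $\varepsilon$-bad for \emph{every} minimal $\varepsilon$-good $A$; then more than $(\tfrac13+\varepsilon)m$ edges lie entirely inside $A$, and, using that $A-a$ is $\varepsilon$-bad for each $a\in A$ (so $a$ lies in many edges meeting $A$ only at $a$), one extracts a second minimal $\varepsilon$-good set disjoint from $A$, or shows the resulting configuration already satisfies the conclusion. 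Among all configurations $(A,B,C)$ with $A,B$ disjoint minimal $\varepsilon$-good and $C=V\setminus(A\cup B)$, fix one that is extremal — say maximising $d(A,B,C)$, and subject to that maximising $|A|+|B|$.

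The core of the argument is then a family of local-exchange inequalities. Moving a vertex from $C$ to $A$ (and then trimming $A$ back to a minimal $\varepsilon$-good set), moving a vertex from $A$ to $C$, swapping a vertex of $A$ with one of $B$, and similar moves, can neither create a partition into three $\varepsilon$-good sets nor beat the chosen extremum; each non-improving move contributes a linear inequality among the quantities $d(A)$, $d(B)$, $d(C)$, $d_2(A)$, $d_2(B)$, $e(A,B,C)$ and the numbers of monochromatic edges inside $A$ and inside $B$. Minimality of $A$ forces, for each $a\in A$, a lower bound on the number of edges meeting $A$ only at $a$; summing these bounds limits how many monochromatic-in-$A$ edges can coexist with $A$ being exactly minimal, and likewise for $B$. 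Meanwhile $\varepsilon$-badness of $C$ caps the number of edges avoiding $C$, hence the number of edges that are monochromatic or split between $A$ and $B$ only. Combining these with $d(A),d(B)\ge(\tfrac23-\varepsilon)m$ should give $r-p>3\varepsilon m$ precisely at the threshold $\varepsilon\ge\tfrac1{15}$.

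I expect two main obstacles. The first is the degenerate case in the existence step, where the complement of every minimal $\varepsilon$-good set is $\varepsilon$-bad: here there is little slack and one must argue directly from the abundance of edges inside $A$. The second, and probably the harder, is the final accounting: there are several edge types and several families of exchange moves, and the difficulty is to select the right extremal quantity and the right linear combination of the resulting inequalities so that the constant comes out at exactly $\varepsilon=\tfrac1{15}$ rather than something weaker. I would organise that last step as a short case split on the relative sizes of the monochromatic edge classes inside $A$ and inside $B$.
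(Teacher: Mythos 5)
Your sketch shares two ingredients with the paper's proof: the identity $d(A,B,C)=2m+r-p$, and the idea of extracting a useful inequality from an extremal partition via single-vertex moves. But your overall architecture differs from the paper's in a way that creates a real gap, and you yourself flag that the core of the argument is left undone.

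The paper does \emph{not} begin by hunting for two disjoint minimal $\varepsilon$-good sets. It takes a partition $(A,B,C)$ maximising $d(A,B,C)$ with no constraints at all, orders so that $d(C)\le d(A),d(B)$, and notes $d(C)<(\tfrac23-\varepsilon)m$ since some part is bad. From unconstrained optimality it derives, by summing the ``move $a$ from $A$ to $C$'' inequality over $a\in A$, the two weighted edge-count inequalities
\[
3e(A,A,A)+2e(A,A,B)\le e(A,B,C)+e(A,C,C),\qquad
3e(B,B,B)+2e(A,B,B)\le e(A,B,C)+e(B,C,C),
\]
and from these plus $d(C)<(\tfrac23-\varepsilon)m$ it deduces $r-p>3\varepsilon m$, i.e.\ \eqref{engines}, \emph{before} doing any trimming. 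The hypothesis $\varepsilon\ge\tfrac1{15}$ enters only to conclude $d(A),d(B)\ge(\tfrac23-\varepsilon)m$, so that $A,B$ can be shrunk to minimal $\varepsilon$-good $A',B'$. The piece you are missing is the monotonicity lemma (Lemma~\ref{reduce} in the paper): if $(A,B,C)$ is semi-optimal, then so is any $(A',B',C')$ with $A'\subseteq A$, $B'\subseteq B$, $C'\supseteq C$. This is what lets the two displayed inequalities, and hence the bound $d(A',B',C')>(2+3\varepsilon)m$, survive the trimming without rerunning the optimisation.

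In your version, by contrast, you optimise $d(A,B,C)$ only over configurations where $A,B$ are already minimal $\varepsilon$-good. That constrained extremum does not obviously satisfy the clean ``move one vertex into $C$'' inequalities, because a single-vertex move typically destroys minimality; the moves you describe (``move a vertex from $C$ to $A$ and then trim $A$ back'') are compound, and it is not clear what linear inequality each one yields. Your handling of the case where $V\setminus A$ is $\varepsilon$-bad for every minimal $\varepsilon$-good $A$ is likewise a placeholder (``one extracts a second minimal $\varepsilon$-good set \ldots or shows the resulting configuration already satisfies the conclusion''); in the paper this case never arises, because largeness of $d(A),d(B)$ is a consequence of \eqref{engines} and $\varepsilon\ge\tfrac1{15}$, not something you must engineer in advance. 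Finally, you explicitly defer ``the final accounting'' — i.e.\ the derivation of $r-p>3\varepsilon m$ from your inequalities — as the hard open step. That step is the theorem. As written, the proposal is a plausible plan in a different and less convenient frame, not a proof.
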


Our proof of Theorem~\ref{engine} is based on two lemmas. 
In order to reduce the clutter, we 
call a partition  $V=A\cup B\cup C$ {\em
optimal} if its degree  $d(A, B, C)$ is as large as possible, 
{\em locally optimal} if this degree cannot be increased by moving a single 
vertex from one class to another, 
and {\em semi-optimal} if it cannot increased by
moving a vertex into $C$. Note that semi-optimality depends on the order
of the sets in our partition; we shall always take the last set, $C$, to
be the exceptional set.
Trivially, every optimal partition is
locally optimal and every locally optimal partition is semi-optimal; 
however, the degree of a locally optimal partition can be rather small.

A simple random argument shows that there is a partition with
$d(A, B, C)\ge \frac{19m}{9}$; however, if 
$V=\{v_{ij}: 1\le i,j \le 3\}$, the edges are
$\{v_{i1},v_{i2},v_{i3}\}$ and $\{v_{1i},v_{2i},v_{3i}\}$ for each $i$,
then for $V_i=\{v_{i1},v_{i2},v_{i3}\}$ the partition 
$V=V_1\cup V_2 \cup V_3$
is locally optimal and $d(V_1, V_2, V_3)$ is only $2m$. 
As we shall see, however, this example is
in some sense typical; locally optimal partitions for 
which $d(A, B, C)$ is close to $2m$ have $d(A),d(B),d(C)$ roughly equal.

\begin{lemma}\label{prep}
Let $V=A\cup B\cup C$ be a semi-optimal partition. Then
\begin{equation}\label{acond}
3e(A,A,A)+2e(A,A,B)\le e(A,B,C)+e(A,C,C),
\end{equation}
and, similarly,
\begin{equation}\label{bcond}
3e(B,B,B)+2e(A,B,B)\le e(A,B,C)+e(B,C,C).
\end{equation}
\end{lemma}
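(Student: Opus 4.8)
The plan is to derive both inequalities from semi-optimality by considering, for each vertex of $A$ (respectively $B$), the effect of moving that single vertex into $C$, and then summing over all such moves.

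First I would fix $a\in A$ and compare $d(A,B,C)$ with the degree of the partition obtained by deleting $a$ from $A$ and adding it to $C$. Since $a$ lies in neither $B$ nor the new last class before the move, $d(B)$ is unaffected; $d(A)$ drops by exactly the number of edges $e$ with $e\cap A=\{a\}$ (an edge with two or more vertices in $A$ still meets $A\setminus\{a\}$), and $d(C)$ rises by exactly the number of edges containing $a$ and disjoint from $C$ (an edge already meeting $C$ gains nothing). Semi-optimality says this move does not increase the degree, so
\begin{equation*}
\#\{e : a\in e,\ e\cap C=\emptyset\}\ \le\ \#\{e : e\cap A=\{a\}\}\qquad\text{for every }a\in A.
\end{equation*}

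Next I would sum this over $a\in A$ and reorganise each side as a count over edges. On the right, an edge is counted once for each $a$ with $e\cap A=\{a\}$, i.e.\ once if $|e\cap A|=1$ and never otherwise, so the total is $e(A,B,B)+e(A,B,C)+e(A,C,C)$. On the left, an edge $e$ disjoint from $C$ is counted $|e\cap A|$ times, namely $3$ if $e\subseteq A$, $2$ if it is of the type counted by $e(A,A,B)$, and $1$ if it is of the type counted by $e(A,B,B)$; so the total is $3e(A,A,A)+2e(A,A,B)+e(A,B,B)$. Cancelling the common term $e(A,B,B)$ gives \eqref{acond}. The inequality \eqref{bcond} follows in precisely the same way by moving vertices of $B$ into $C$, using that semi-optimality constrains moves out of $B$ exactly as it does moves out of $A$ (this is the symmetry exchanging $A$ and $B$ while fixing the exceptional class $C$).

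I do not expect a genuine obstacle here. The only point requiring care is the case analysis when computing how $d(A)$ and $d(C)$ change — making sure that edges already meeting $C$, and edges meeting $A$ in more than one vertex, are handled correctly — together with the accompanying check that each of the ten possible edge types relative to the partition $(A,B,C)$ is assigned the right multiplicity in the two summations.
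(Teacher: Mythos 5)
Your argument is correct and is essentially the same as the paper's: fix a vertex $a\in A$, use semi-optimality with respect to moving $a$ into $C$ to get the per-vertex inequality, and sum over $a\in A$ (and symmetrically over $B$). The only cosmetic difference is that the paper cancels the $e(a,B,B)$ term at the per-vertex level before summing, whereas you sum first and cancel $e(A,B,B)$ afterward.
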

\begin{proof}
Clearly, it suffices to prove \eqref{acond}.
Pick a vertex $a\in A$, and let us see how the degree $d(A,B,C)$ 
changes if we move $a$ from $A$ to $C$.
Letting $A'=A\setminus\{a\}$, $C'=C\cup\{a\}$, 
the difference $d(A)-d(A')$ is the number of edges
meeting $A$ only in $a$, i.e.
\begin{equation*}
d(A)-d(A')=e(a,B,B)+e(a,B,C)+e(a,C,C).
\end{equation*}
Similarly, $d(C')-d(C)$ is the number of edges which 
contain $a$ but are disjoint from $C$:
\begin{equation*}
d(C')-d(C)=e(a,A,A)+e(a,A,B)+e(a,B,B).
\end{equation*}
The difference of these identities gives us
\begin{equation*}
d(A, B, C)+e(a,A,A)+e(a,A,B)=d(A', B, C')+e(a,B,C)+e(a,C,C).
\end{equation*}
As $(A,B,C)$ is a semi-optimal partition,
\begin{equation*}
d(A, B, C)\ge d(A', B, C'),
\end{equation*}
and so we find that
\begin{equation}\label{extra}
e(a,A,A)+e(a,A,B)\le e(a,B,C)+e(a,C,C).
\end{equation}
Since this holds for every $a\in A$,
\begin{equation*}
\sum_{a\in A}e(a,A,A)+\sum_{a\in A}e(a,A,B) 
\le \sum_{a\in A}e(a,B,C)+\sum_{a\in A}e(a,C,C),
\end{equation*}
and so
\begin{equation*}
3e(A,A,A)+2e(A,A,B) \le e(A,B,C)+e(A,C,C).
\end{equation*}
as required.
\end{proof}

Lemma~\ref{prep} tells us that, {\em a fortiori}, every optimal partition 
$(A, B, C)$ satisfies \eqref{acond} and \eqref{bcond}.
Next, we show that the semi-optimality of a partition $(A, B, C)$ is
preserved if we move vertices into $C$.

\begin{lemma}\label{reduce}
Let $(A,B,C)$ be a semi-optimal partition, $(A', B', C')$ be a partition
with $A'\subseteq A$, $B'\subseteq B$ and $C' \supseteq C$.
Then $(A', B', C')$ is also semi-optimal.
\end{lemma}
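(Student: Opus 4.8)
The plan is to reduce semi-optimality to the pointwise inequality \eqref{extra} that already appeared in the proof of Lemma~\ref{prep}, and then to track that inequality through the refinement $(A,B,C)\mapsto(A',B',C')$ by a direct edge-counting comparison. First I would record the observation, implicit in the proof of Lemma~\ref{prep}, that a partition $(X,Y,Z)$ is semi-optimal (with $Z$ the exceptional class) \emph{precisely} when $e(x,X,X)+e(x,X,Y)\le e(x,Y,Z)+e(x,Z,Z)$ holds for every $x\in X$ together with the symmetric inequality $e(y,Y,Y)+e(y,X,Y)\le e(y,X,Z)+e(y,Z,Z)$ for every $y\in Y$: the identity established in Lemma~\ref{prep} shows that moving $x$ into $Z$ changes $d(X,Y,Z)$ by exactly the difference of the two sides (and symmetrically for $y\in Y$), so semi-optimality is equivalent to the conjunction of these inequalities.

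Next I would fix $a'\in A'$ and compare the two sides of the corresponding inequality for $(A',B',C')$ with the one already known for $(A,B,C)$ at $a'$. The left-hand side $e(a',A',A')+e(a',A',B')$ counts exactly the edges through $a'$ whose other two vertices both lie in $A'\cup B'$ with at least one of them in $A'$; since $A'\subseteq A$ and $B'\subseteq B$, every such edge is among those counted by $e(a',A,A)+e(a',A,B)$, so the left-hand side can only decrease under the refinement. For the right-hand side I would use that $C'\supseteq C$ and --- the key point --- that $B\subseteq B'\cup C'$, which holds because $B\setminus B'$ is disjoint from $A'$ (as $A'\subseteq A$) and hence lies in $C'=V\setminus(A'\cup B')$; thus any edge through $a'$ with its other two vertices in $B\cup C$ and at least one of them in $C$ has, in the refined partition, its other two vertices in $B'\cup C'$ with at least one in $C'$, so $e(a',B,C)+e(a',C,C)\le e(a',B',C')+e(a',C',C')$. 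Chaining these two comparisons with the semi-optimality inequality for $(A,B,C)$ at $a'$ yields the required inequality at $a'$ for $(A',B',C')$.

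Finally, the case of a vertex $b'\in B'$ is handled by the mirror-image argument, now using $A\subseteq A'\cup C'$ (because $A\setminus A'$ is disjoint from $B'$ and so lies in $C'$) in place of $B\subseteq B'\cup C'$. Since both families of pointwise inequalities then hold for $(A',B',C')$, it is semi-optimal. I do not anticipate a genuine obstacle: the only subtlety is to notice that although $B\not\subseteq B'$ in general, the weaker containment $B\subseteq B'\cup C'$ does hold, and this is exactly what is needed to push the ``good'' side of the inequality upward; everything else is local bookkeeping at a single vertex, and no optimality of $(A',B',C')$ or any global counting is invoked.
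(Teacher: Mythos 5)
Your argument is correct and is essentially the paper's proof, phrased slightly more cleanly: you extract from the proof of Lemma~\ref{prep} the characterization of semi-optimality as the pointwise inequality holding at every vertex of $A$ and of $B$, and then chain that inequality from $(A',B',C')$ through $(A,B,C)$ using the containments $A'\subseteq A$, $B'\subseteq B$, $C\subseteq C'$, and $B\cup C\subseteq B'\cup C'$. The paper performs exactly this chain (it writes the right-hand comparison in the algebraic form $e(a,B\cup C,B\cup C)-e(a,B,B)\le e(a,B'\cup C',B'\cup C')-e(a,B',B')$ rather than as your direct edge-set inclusion), so there is no substantive difference in approach.
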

\begin{proof}
It suffices to show that if $a\in A'$,  $A''=A'\setminus \{a\}$ and 
$C''=C'\cup \{a\}$ then $(A'',B',C'')$ is semi-optimal. Clearly,
\begin{equation*}
d(A')-d(A'')=e(a,B',B')+e(a,B',C')+e(a,C',C')
\end{equation*}
and
\begin{equation*}
d(C'')-d(C')=e(a,A',A')+e(a,A',B')+e(a,B',B').
\end{equation*}
Taking the difference of these identities, we find that
\begin{eqnarray}
 d(A', B', C') + && \hspace{-15pt} e(a,A',A')\ +  e(a,A',B') \nonumber \\
&& \hspace{-27pt} = \ d(A'', B', C'')+e(a,B',C')+e(a,C',C').  \label{splodge}
\end{eqnarray}
Since $(A,B,C)$ is semi-optimal and $a\in A'\subseteq A$, 
inequality \eqref{extra} holds. Consequently, as
$A'\subseteq A, B'\subseteq B, B'\cup C'\supseteq B\cup C$, 
we find that
\begin{eqnarray*}
 e(a,A',A')+e(a,A',B')&\le& e(a,A,A)+e(a,A,B)\\
 &\le& e(a,B,C)+e(a,C,C) \\
 &=& e(a,B\cup C,B\cup C)-e(a,B,B) \\
 &\le& e(a,B'\cup C',B'\cup C')-e(a,B',B') \\
 &=& e(a,B',C')+e(a,C',C').
\end{eqnarray*}
Together with (\ref{splodge}), this gives
\begin{equation*}
d(A', B', C') \ge d(A'', B', C''),
\end{equation*}
as required.
\end{proof}

After this preparation, we are ready to prove Theorem 1.
\begin{proof}
Let $V=A\cup B\cup C$ be an optimal partition; renaming the parts, 
if necessary, we may assume that 
$d(C)\le d(A), d(B)$. Since, by assumption, at least one of these classes
is $\varepsilon$-bad, we must have 
\begin{equation}
d(C)<\left(\frac{2}{3}-\varepsilon\right)m. \label{csmall}
\end{equation}
Since the partition $(A, B, C)$ is semi-optimal (in fact, optimal),
inequalities \eqref{acond} and \eqref{bcond} hold.

We claim that \eqref{csmall}, \eqref{acond} and \eqref{bcond} together 
imply \eqref{engines}.
To prove this claim, add \eqref{acond} and \eqref{bcond}, and then 
add $e(C,C,C)$ to both sides to obtain
\begin{eqnarray}
2\big(e(A,A,A)+ \hspace{-18pt} && e(B,B,B)+e(A,A,B)+e(A,B,B)\big) \nonumber \\
&&\hspace{-10pt}+e(A,A,A)+e(B,B,B)+e(C,C,C)  \nonumber \\
&&\hspace{-10pt}\le 2e(A,B,C)+e(A,C,C)+e(B,B,C)+e(C,C,C).  \label{long}
\end{eqnarray}
Note that
\begin{equation}
e(A,A,A)+e(B,B,B)+e(A,A,B)+e(A,B,B)=m-d(C) \label{ida}
\end{equation}
and
\begin{equation}
e(A,B,C)+e(A,C,C)+e(B,B,C)+e(C,C,C)\le d(C). \label{idb}
\end{equation}
Substituting (\ref{ida}) and (\ref{idb}) into (\ref{long}) gives
\begin{equation*}
 2\big(m-d(C')\big)+e(A,A,A)+e(B,B,B)+e(C,C,C)
 \le e(A,B,C)+d(C). 
\end{equation*}
Recalling (\ref{csmall}), we see that
\begin{equation}
e(A,B,C)-\left(e(A,A,A)+e(B,B,B)+e(C,C,C)\right) 
> 3\varepsilon.m . \label{almost}
\end{equation}
We may regard $d(A, B, C)$ as the sum over the {\em edges} of the number 
of parts meeting that edge;
thus, since $e(A,B,C)$ edges meet three parts, $e(A,A,A)+e(B,B,B)+e(C,C,C)$ 
meet one part and the others
meet two, we have
\begin{equation*}
d(A)+d(B)+d(C) = 2m+e(A,B,C)-\big(e(A,A,A)+e(B,B,B)+e(C,C,C)\big).
\end{equation*}
Together with (\ref{almost}), this gives us \eqref{engines}:
\begin{equation*}
d(A, B, C)>(2+3\varepsilon)m,
\end{equation*}
proving our claim.

We shall now find a new partition which satisfies the other 
requirements of Theorem 1.
Notice that \eqref{engines} and \eqref{csmall} 
give $d(A)+d(B)>\left(\frac{4}{3}+4\varepsilon\right)m$.
Since $d(A), d(B)\le m$, we must have 
$d(A), d(B)>\left(\frac{1}{3}+4\varepsilon\right)m
\ge\left(\frac{2}{3}-\varepsilon\right)m$ since
$\varepsilon\ge\frac{1}{15}$.

Let $A'\subseteq A$ be a minimal subset with 
$d(A)\ge\left(\frac{2}{3}-\varepsilon\right)m$, and let $B'$
be defined similarly. Let $C'=C\cup(A\setminus A')\cup(B\setminus B')$. 
By Lemma \ref{reduce}, $A', B', C'$
still satisfies the conditions of Lemma \ref{prep} so
\begin{equation*}
3e(A',A',A')+2e(A',A',B')\le e(A',B',C')+e(A',C',C')
\end{equation*}
and
\begin{equation*}
3e(B',B',B')+2e(A',B',B')\le e(A',B',C')+e(B',C',C').
\end{equation*}
Also, by the assumption that no partition into $\varepsilon$-good parts exists,
\begin{displaymath}
d(C')<\left(\frac{2}{3}-\varepsilon\right)m,
\end{displaymath}
and, as above, we still have
\begin{displaymath}
d(A')+d(B')+d(C')>(2+3\varepsilon)m.
\end{displaymath}
Since, by construction, $A',B'$ are minimal $\varepsilon$-good sets, 
$A',B',C'$ is of the form required.
\end{proof}

Note that we use the fact that $\varepsilon\ge\frac{1}{15}$ only to 
deduce that any locally optimal partition
has at least two $\varepsilon$-good parts; the value of 
$\frac{1}{15}$ is in fact tight. To see this consider the
hypergraph with vertex set $\{a,b,c,d,e,f,g\}$ and edges 
$abc, def, adg, beg, cfg$. A locally optimal partition
is $A=\{a,d,g\}, B=\{b,e\}, C=\{c,f\}$, but 
$d(B)=d(C)=3=\left(\frac{2}{3}-\frac{1}{15}\right)m$.

We can immediately deduce a partial result from Theorem \ref{engine}.
\begin{corollary}\label{fivenine}
There exists a partition with each part meeting at least $\frac{5m}{9}$ edges.
\end{corollary}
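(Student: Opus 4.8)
The plan is to derive Corollary~\ref{fivenine} from Theorem~\ref{engine} by taking $\varepsilon = \frac{1}{15}$, which is the smallest value allowed. If $G$ can already be partitioned into three $\varepsilon$-good sets with $\varepsilon=\frac1{15}$, then each part meets at least $\left(\frac23 - \frac1{15}\right)m = \frac{9m}{15} = \frac{3m}{5}$ edges, which is comfortably more than $\frac{5m}{9}$, so we are done in that case. Otherwise, Theorem~\ref{engine} applies.

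So suppose $G$ has no partition into three $\frac1{15}$-good sets. Theorem~\ref{engine} hands us a partition $V = A\cup B\cup C$ with $A$, $B$ minimal $\frac1{15}$-good and $d(A,B,C) > \left(2 + \frac{3}{15}\right)m = \frac{11m}{5}$. The key point is \emph{minimality}: since $A$ is minimal $\frac1{15}$-good, it meets at least $\frac{3m}{5}$ edges, but removing any single vertex drops it below $\frac{3m}{5}$. I want to bound $d(A)$ from above. Pick any $a\in A$; then $A\setminus\{a\}$ meets fewer than $\frac{3m}{5}$ edges, so $a$ is the \emph{unique} vertex of $A$ in more than $d(A) - \frac{3m}{5}$ edges. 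But for any two vertices $a,a'\in A$, the sets of edges meeting $A$ only in $a$ and only in $a'$ are disjoint, so summing over $a\in A$ we get $d(A)\ge |A|\left(d(A) - \frac{3m}{5}\right)$ — this is useful only when $|A|$ is small. The honest route is: if $|A|\ge 2$, pick distinct $a,a'\in A$; the edges meeting $A$ solely in $a$ number more than $d(A)-\frac{3m}{5}$, likewise for $a'$, and these edge sets are disjoint, so $2\left(d(A) - \frac{3m}{5}\right) < d(A)$, giving $d(A) < \frac{6m}{5}$ — vacuous. I need a sharper consequence of minimality, or to exploit that \emph{all} the other sets are bad.

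The cleaner approach: since no partition into three $\frac1{15}$-good sets exists, $C$ is $\frac1{15}$-bad, i.e.\ $d(C) < \frac{3m}{5}$. Combined with $d(A,B,C) > \frac{11m}{5}$ we get $d(A) + d(B) > \frac{11m}{5} - \frac{3m}{5} = \frac{8m}{5}$, hence (using $d(A),d(B)\le m$) both $d(A) > \frac{3m}{5}$ and $d(B) > \frac{3m}{5}$. That already gives two parts meeting more than $\frac{5m}{9}$ edges, but I also need the third part $C$ to meet at least $\frac{5m}{9}m$ edges, which is the real content. Here I would instead \emph{not} insist on the $C$ from the theorem: I would move some vertices of $A'$ or $B'$ back, or more simply observe that among the three quantities $d(A),d(B),d(C)$ summing to more than $\frac{11m}{5}$, if the smallest were below $\frac{5m}{9}$ then the other two sum to more than $\frac{11m}{5} - \frac{5m}{9} = \frac{74m}{45}$, forcing — no contradiction yet since $2m > \frac{74m}{45}$.

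The genuine fix is to apply Theorem~\ref{engine} with a \emph{larger} $\varepsilon$. Take $\varepsilon = \frac{1}{9}$, so $\frac23 - \varepsilon = \frac{5}{9}$, and an $\varepsilon$-good set is exactly one meeting at least $\frac{5m}{9}$ edges. If $G$ has a partition into three $\frac19$-good sets, each meets at least $\frac{5m}{9}$ edges and we are done. Otherwise Theorem~\ref{engine} (valid since $\frac19 \ge \frac1{15}$) gives a partition with $d(A,B,C) > \left(2 + \frac13\right)m = \frac{7m}{3}$ and $A,B$ minimal $\frac19$-good; then $d(A),d(B)\ge \frac{5m}{9}$ by goodness, and $d(C) = d(A,B,C) - d(A) - d(B) > \frac{7m}{3} - 2m = \frac{m}{3}$ — still not enough for $C$. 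Thus the main obstacle is controlling $d(C)$, and the resolution must use that $A$, $B$ are \emph{minimal} good: an upper bound $d(A), d(B) \le \frac{5m}{9} + (\text{max degree contribution})$, but in the worst case a single vertex can lie in many edges. The right statement to prove is $d(A) < \frac{5m}{9} + \frac{1}{3}m$ is false in general, so instead I expect one uses $d(A)+d(B) \le m + d_2(\text{something})$ type bounds or directly that $e(A,B,C)$ large forces $C$ to meet many edges. I anticipate the paper's actual argument simply notes $d(C) = d(A,B,C) - d(A) - d(B) \ge \frac{7m}{3} - 2m = \frac{m}{3}$ is wrong and instead takes the \emph{minimum} part: with $d(A,B,C) > \frac{7m}{3}$ and two parts good, if the third (not necessarily $C$) has degree $x$, then renaming so $C$ is smallest, $d(C)\le \frac13 d(A,B,C)$ fails to help, so one must bound $d(A)+d(B)$ above by $\frac{7m}{3} - \frac{5m}{9} $ only if the third part is $\frac19$-bad — but that third part is $C$, already handled. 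So the clean deduction is: the three part-degrees sum to more than $\frac{7m}{3}$, and $A,B$ are good; since $G$ is not $\frac19$-partitionable, $d(C) < \frac{5m}{9}$, whence $d(A) + d(B) > \frac{7m}{3} - \frac{5m}{9} = \frac{16m}{9}$, so each of $d(A),d(B) > \frac{7m}{9} > \frac{5m}{9}$ — giving two good parts but still not three. Hence the corollary as literally stated ("each part") cannot follow so cheaply, and I expect the paper actually proves only that \emph{some} partition has each part meeting $\ge \frac{5m}{9}$ via a separate short argument: take an optimal partition, if all three parts are $\frac19$-good we are done, else Theorem~\ref{engine} with $\varepsilon=\frac19$ gives a new partition into $A',B',C'$ with $A',B'$ good and $d(A',B',C') > \frac{7m}{3}$; now \emph{again} $C'$ is bad (no $\frac19$-partition), but we may now shift vertices: since $d(A') \ge \frac{5m}{9}$ and $d(A')$ is probably much larger (as $d(A')+d(B') > \frac{16m}{9}$), we can move vertices from $A'$ (or $B'$) into $C'$ to boost $d(C')$ to at least $\frac{5m}{9}$ while, by minimality being lost we no longer need it, keeping $d(A')$ above $\frac{5m}{9}$. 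The hard part, then, is verifying that this rebalancing can be done so that no part drops below $\frac{5m}{9}$, i.e.\ that one cannot be stuck with one huge part and two small ones — and this is exactly where $d(A')+d(B') > \frac{16m}{9}$ together with $d(A'),d(B') \le m$ is used, since it forces both $d(A'),d(B') > \frac{7m}{9}$, leaving a margin of $\frac{2m}{9}$ in each to transfer to $C'$. I would carry out this transfer argument carefully as the one nonroutine step.
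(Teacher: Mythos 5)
Your choice of $\varepsilon=\frac{1}{9}$ is the right one, and you correctly extract $d(A)+d(B)>\frac{16m}{9}$ from $d(A,B,C)>\frac{7m}{3}$ together with $d(C)<\frac{5m}{9}$. But the proposal stalls there: you never actually close the argument, and the rebalancing scheme you sketch at the end has a real gap. Moving a vertex $a$ from $A$ into $C$ increases $d(C)$ only by the number of edges through $a$ that are \emph{disjoint} from $C$; this can be zero for every $a\in A$ (for instance when every edge through $A$ already touches $C$), so the margin $d(A)>\frac{7m}{9}$ by itself does not let you push mass into $C$. You would need a further structural fact to guarantee a profitable transfer, and you don't supply one.

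The paper's proof is a contradiction argument, not a rebalancing. Assume no part-wise $\frac{5m}{9}$-good partition exists and apply Theorem~\ref{engine} with $\varepsilon=\frac{1}{9}$. The ingredient you are missing is the count $d_2(A)+d_2(B)\ge m-d(C)$: every edge avoiding $C$ must hit $A$ or $B$ in at least two vertices. Adding this to $d(A)+d(B)>\frac{16m}{9}$ gives $d(A)+d(B)+d_2(A)+d_2(B)>\frac{20m}{9}$, so WLOG $d(A)+d_2(A)>\frac{10m}{9}$. Write $d(A)=\frac{5m}{9}+\alpha$, so $d_2(A)>\frac{5m}{9}-\alpha$; in particular $d_2(A)>0$ forces $|A|\ge 2$. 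Now use minimality exactly as you started to: for a fixed $a\in A$, more than $\alpha$ edges meet $A$ only in $a$. Picking a different $a'\in A$, both the $d_2(A)$ edges hitting $A$ twice and the $>\alpha$ edges hitting $A$ only in $a$ still meet $A\setminus\{a'\}$, and these two families are disjoint, so $d(A\setminus\{a'\})>\alpha+d_2(A)>\frac{5m}{9}$, contradicting minimality of $A$. This is the step your proposal is missing; without the $d_2$ estimate the minimality of $A$ and $B$ gives you nothing usable, as you yourself observed when your bound collapsed to the vacuous $d(A)<\frac{6m}{5}$.
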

\begin{proof}
Suppose not. Let $(A,B,C)$ be the partition guaranteed by
Theorem \ref{engine} for $\varepsilon=\frac{1}{9}$.
Since  every edge not meeting $C$
meets either $A$ or $B$ in at least 2 vertices, we have
\begin{displaymath}
d(A)+d(B)+d_2(A)+d_2(B)>\left(\frac{4}{3}+4\varepsilon\right)m
+m-d(C)>\frac{20m}{9},
\end{displaymath}
where, as before, $d_2(A)$ denotes the number of edges meeting $A$ 
in at least 2 vertices,

We may assume that  
$d(A)+d_2(A)>\frac{10m}{9}$. Write $d(A)=\frac{5m}{9}+\alpha$ so that 
$d_2(A)>\frac{5m}{9}-\alpha$. Trivially, 
$d_2(A)>0$, so $A$ must contain at least 2 vertices. Pick $a\in A$; since, by 
Theorem \ref{engine}, $A$ is a minimal $\varepsilon$-good set, $d(A\setminus\{a\})<\frac{5m}{9}$, so there are at 
least $\alpha$ edges meeting $A$ only in $a$. 
Pick a different vertex $a'\in A$; if $e$ is an edge which either
meets $A$ in at least two vertices or meets $A$ only in $a$, the edge
$e$ meets $A\setminus\{a'\}$, and so
\begin{displaymath}
d(A\setminus\{a'\})>\alpha+d_2(A)>\frac{5m}{9}
\end{displaymath}
contradicting minimality of $A$.
\end{proof}

\section{Multigraphs with special vertices}
To go further we shall be more careful with our estimates. 
To this end, we shall prove a judicious partitioning 
result about (multi-)graphs with some ``special'' vertices. 
We may think of these as (multi-)hypergraphs with 
edges of size at most 2. 
However, we use the formulation of special vertices to highlight 
the vital point that we permit repeated edges of size 2, 
but do not permit repeated edges of size 1. This result, as well 
as more general results in a similar vein, were conjectured in \cite{BS02a}.

For such a multigraph $G=(V,E,S)$ on vertex set $V$ with special vertices 
$S \subseteq V$,
and sets $W_1,W_2 \subseteq V$ of vertices, we write
$f(W_i)$ for the number of special vertices in $W_i$. Also, as usual,
$e(W_1)$ denotes the number of edges 
spanned by $W_1$ and $e(W_1,W_2)$ the number of 
edges $xy$ of the form $x\in W_1$, $y\in W_2$.
\begin{theorem}\label{multi}
Let $G=(V,E,S)$ be a multigraph with $m$ edges 
and $k$ special vertices. Then there is a partition
$V=V_1\cup V_2$ such that, for $i=1,2$:
\begin{equation}
e(V_i)+f(V_i)\le \frac{m}{3}+\frac{k+1}{2}. \label{spec}
\end{equation}
\end{theorem}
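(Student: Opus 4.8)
The plan is to argue from an extremal partition, reading structure off the absence of improving moves, exactly as in the maximum‑cut proof of the graph bound $e(V_i)\le m/3$. The quantity to control is $D:=\max(g_1,g_2)$, where $g_i:=e(V_i)+f(V_i)$; a plain maximum‑cut partition will not do (for a star with at least four special leaves the unique maximum cut violates \eqref{spec}), so instead I would choose the partition $V=V_1\cup V_2$ that minimises $D$, breaking ties by taking $e(V_1,V_2)$ as large as possible. Since each edge is counted in exactly one of $e(V_1),e(V_2),e(V_1,V_2)$ and each special vertex in exactly one of $f(V_1),f(V_2)$, one has $g_1+g_2=m-e(V_1,V_2)+k$. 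Relabelling so that $g_1\ge g_2$ and writing $\delta:=g_1-g_2\ge 0$, this gives $D=g_1=\tfrac12\bigl(m-e(V_1,V_2)+k+\delta\bigr)$, so the theorem reduces to the single inequality $e(V_1,V_2)\ge \tfrac m3+\delta-1$.

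To prove that, I would extract the local conditions. Moving a vertex $v\in V_1$ into $V_2$ changes $g_1$ by $-\bigl(d_{V_1}(v)+[v\in S]\bigr)$ and $g_2$ by $+\bigl(d_{V_2}(v)+[v\in S]\bigr)$ (where $d_{W}(v)$ is the number of edges from $v$ to $W$), so minimality of $D$ forces every $v\in V_1$ to be either non‑special with no neighbour in $V_1$, or to satisfy $d_{V_2}(v)\ge\delta-[v\in S]$, the tie‑break sharpening the borderline case. Letting $T\subseteq V_1$ be the set of vertices of the second kind, every edge inside $V_1$ and every special vertex of $V_1$ lies in $T$, so $e(V_1)=e(T)$ and $f(V_1)=f(T)$, while summing $d_{V_2}(v)\ge\delta-[v\in S]$ over $v\in T$ gives $e(V_1,V_2)\ge |T|\,\delta-f(V_1)$. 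Feeding this, together with $m=e(T)+e(V_2)+e(V_1,V_2)$ and $\delta=e(T)+f(V_1)-e(V_2)-f(V_2)$, into the target inequality should finish the argument after a short case analysis: when $\delta$ (equivalently $|T|$) is large the displayed lower bound on $e(V_1,V_2)$ already suffices, and when $\delta$ is small $g_1$ and $g_2$ are nearly equal so one leans on maximality of the cut; throughout one watches the parity of $k$, since $(k+1)/2=\lceil k/2\rceil$ only when $k$ is odd, and the regime of small $m$ (where the bound is essentially $(k+1)/2$) merely says the special vertices can be split almost evenly without piling up internal edges.

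The hard part, I expect, is that—unlike for an ordinary maximum cut—a single‑vertex move need not be available even at a genuinely suboptimal partition: a vertex of $V_1$ with many neighbours inside $V_1$ but also more than $\delta$ neighbours in $V_2$ makes $D$ strictly \emph{increase} when moved, yet such a vertex can wreck the crude counting of $e(T)$. Resolving this will likely require either a further tie‑breaking objective or an analysis of swap moves (exchanging some $u\in V_1$ with some $v\in V_2$), and the swap analysis is more delicate: an edge $uv$, if present, contributes correction terms, and swapping two special vertices leaves the $f$‑values fixed while swapping a special with a non‑special shifts them, so the argument bifurcates on the types of the swapped vertices. For comparison, the tempting shortcut of reducing to the pure graph bound by adjoining one or two new vertices joined to every special vertex and taking a maximum cut does control one side cleanly, but a quick computation shows it loses a term of order $k$ on the other side and so settles only $k\le 3$; this is why the extremal argument above seems to be the right route.
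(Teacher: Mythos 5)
Your proposal is a sketch, not a proof, and you say so yourself: the claimed reduction to $e(V_1,V_2)\ge \tfrac m3+\delta-1$ is fine, and the bound $e(V_1,V_2)\ge|T|\,\delta-f(V_1)$ is correctly derived, but you never carry out the ``short case analysis'' that is supposed to close the gap between these two displays, and you explicitly flag that you cannot: your final paragraph identifies that local moves can be unavailable at a suboptimal partition for your objective $D=\max_i\bigl(e(V_i)+f(V_i)\bigr)$, and that swap moves or a further tie-break ``will likely'' be needed. That is a genuine gap, not a presentational one. In particular, when $\delta$ is small (say $\delta\le 1$) the bound $e(V_1,V_2)\ge|T|\,\delta-f(V_1)$ tells you essentially nothing, and you have not extracted any usable inequality from the tie-breaking rule that maximises the cut; $\delta$ and $|T|$ are also not ``equivalently'' large, since $\delta=1$ is compatible with $|T|$ of any size.

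The paper avoids this difficulty by decoupling the two quantities rather than optimising the sum $e(V_i)+f(V_i)$ directly. It takes a partition that minimises $e(V_1)+e(V_2)$ (so the standard local-move argument immediately gives $e(V_i)\le m/3$ for both $i$, with no $\delta$ intervening), breaks ties by minimising $|f(V_1)-f(V_2)|$, and observes that this tie-break strengthens the local inequality to $e(v,V_1)+1\le e(v,V_2)$ for every \emph{special} $v\in V_1$. It then grows $V_2$ to a maximal superset $W_2$ still satisfying the target bound and uses those strengthened local inequalities for the vertices that remain in $W_1\subseteq V_1$ to force $W_1$ to satisfy the bound as well (else a short counting argument gives $e(w,W_2)>m/3$ for at least three vertices $w$, a contradiction). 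This ``optimise edges, tie-break on specials, then grow one side maximally'' structure is exactly what lets every local condition be a clean one-sided inequality that can be summed, which is where your combined objective gets stuck. If you want to salvage your route, you would need to prove a concrete statement for the $\delta$-small regime from cut-maximality and show that it meshes with the $|T|\,\delta$ bound for all intermediate $\delta$; at present that is asserted, not shown.
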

\begin{proof}

Again, we call a partition {\em optimal} if it minimizes $e(V_1)+e(V_2)$, and 
{\em locally optimal} if if this sum cannot be increased by moving a single 
vertex from one class to the other. We first note that if $V_1, V_2$ is locally optimal
then $e(V_i)\le\frac{m}{3}$: indeed, for each vertex $v\in V_1$ we must have
\begin{displaymath}
e(v,V_1)\le e(v,V_2),
\end{displaymath}
and summing this over all $v\in V_1$ gives
\begin{equation}
2e(V_1)\le e(V_1,V_2). \label{that}
\end{equation}
Observing that $m=e(V_1)+e(V_2)+e(V_1,V_2)$, \eqref{that} is equivalent to
\begin{displaymath}
3e(V_1)+e(V_2)\le m.
\end{displaymath}

Suppose that no partition satisfying \eqref{spec} exists. Let us choose an optimal 
partition $V_1, V_2$ for which $|f(V_1)-f(V_2)|$ is as small as possible (among optimal partitions) 
and $f(V_1) \ge f(V_2)$. Since $V_1, V_2$ is optimal, $e(V_2)\le\frac{m}{3}$, and since 
$f(V_2)\le f(V_1)$, $f(V_2)\le \frac{k-1}{2}$; thus $V_2$ satisfies \eqref{spec}. By assumption, then,
$V_1$ cannot also satisfy \eqref{spec}, so
\begin{displaymath}
e(V_1)+f(V_1) > \frac{m}{3}+\frac{k+1}{2}.
\end{displaymath}
Since the partition is optimal, $e(V_1)\le\frac{m}{3}$ and so we must have
\begin{displaymath}
f(V_1) > \frac{k+1}{2},
\end{displaymath}
and since $f(V_1)+f(V_2)=m$, therefore
\begin{equation}
f(V_1) > f(V_2)+1. \label{gap}
\end{equation}
Let $v$ be any vertex in $V_1$; since $V_1,V_2$ is locally optimal we must have
\begin{equation}
e(v,V_1)\le e(v,V_2). \label{fg}
\end{equation}
If also $v$ is special, then by choice of our partition we must have
\begin{displaymath}
e(v,V_1)\ne e(v,V_2),
\end{displaymath}
and hence
\begin{equation}
e(v,V_1)+1\le e(v,V_2), \label{fgh}
\end{equation}
since otherwise moving $v$ into $V_2$ gives another optimal partition $V'_1, V'_2$, 
and using \eqref{gap},
\begin{eqnarray*}
|f(V'_1)-f(V'_2)| &=& |f(V_1)-f(V_2)-2| \\ 
&<& |f(V_1)-f(V_2)|.
\end{eqnarray*}

Now we aim to show that we may move vertices across from $V_1$ to $V_2$ 
to get a partition satisfying \eqref{spec}. Take $W_2\subseteq V_2$
maximal such that
\begin{displaymath}
e(W_2)+f(W_2)\le \frac{m}{3}+\frac{k+1}{2},
\end{displaymath}
and write $W_1=V\setminus W_2$. Now if $w\in W_1$ is special then by \eqref{fgh}
\begin{equation}
e(w,W_1)+1\le e(w,V_1)+1\le e(w,V_2)\le e(w,W_2), \label{jk}
\end{equation}
and if $w\in W_1$ is not special
\begin{equation}
e(w,W_1)\le e(w,V_1)\le e(w,V_2)\le e(w,W_2). \label{jkl}
\end{equation}
Equivalently to \eqref{jk} and \eqref{jkl} we may write for any $w\in W_1$
\begin{displaymath}
e(w,W_1)+{\mathbf 1}_{w\in S}\le e(w,W_2), 
\end{displaymath}
and sum over all $w\in W_1$ to give
\begin{displaymath}
2e(W_1)+f(W_1) \le e(W_1,W_2).
\end{displaymath}
Adding $e(W_1)+2f(W_1)$ to both sides gives
\begin{displaymath}
3e(W_1)+3f(W_1) \le e(W_1)+e(W_1,W_2)+2f(W_1),
\end{displaymath}
and since $m=e(W_1)+e(W_2)+e(W_1,W_2)$, we have
\begin{displaymath}
e(W_1)+f(W_1) \le \frac{1}{3}(m+2f(W_1)-e(W_2)).
\end{displaymath}

If $e(W_1)+f(W_1)\le \frac{m}{3}+\frac{k+1}{2}$ we are done. If not, certainly 
$\frac{2k}{3} > \frac{k+1}{2}$,  i.e. $k>3$, 
and since $e(W_1)\le e(V_1)\le \frac{m}{3}$,
we must have $f(W_1)>\frac{k+1}{2}>2$. Also
\begin{displaymath}
\frac{m}{3}+\frac{k+1}{2} < e(W_1)+f(W_1) \le\frac{1}{3}(m+2f(W_1)-e(W_2)),
\end{displaymath}
and, since $f(W_1)+f(W_2)=k$,
\begin{displaymath}
\frac{f(W_1)+f(W_2)+1}{2} < \frac{2f(W_1)-e(W_2)}{3},
\end{displaymath}
{\em i.e.}
\begin{displaymath}
2e(W_2)+3f(W_2)+3 < f(W_1),
\end{displaymath}
and since $f(W_2)\ge 0$,
\begin{eqnarray*}
e(W_2)+f(W_2) &<& \frac{f(W_1)-3}{2} \\
&\le& \frac{k+1}{2}-2.
\end{eqnarray*}

Recall that $W_2$ is a maximal set satisfying
\begin{displaymath}
e(W_2)+f(W_2)\le \frac{m}{3}+\frac{k+1}{2}.
\end{displaymath}
If $w\in W_1$,
\begin{eqnarray*}
e(W_2\cup\{w\})+f(W_2\cup\{w\})&\le& e(W_2)+e(w,W_2)+f(W_2)+1 \\
&<&\frac{k+1}{2}+e(w,W_2).
\end{eqnarray*}
But $e(W_2\cup\{w\})+f(W_2\cup\{w\})
>\frac{m}{3}+\frac{k+1}{2}$, so $e(w,W_2)>\frac{m}{3}$ for each $w\in W_1$,
which is impossible since $|W_1| \ge f(W_1) \ge 3$.
\end{proof}
Theorem \ref{multi} tells us that if $G$ is a (multi-)hypergraph 
with $k$ distinct edges of size 1 and $m$ (not necessarily
distinct) edges of size at least 2, we may find a partition of $G$ 
into two parts $V_1,V_2$ which satisfies
\begin{displaymath}
d(V_i)\ge \frac{2m}{3}+\frac{k-1}{2}.
\end{displaymath}
We may see this by first replacing each edge of size greater than 2 
with a subedge of size 2. This is a strengthening
of the result proved in \cite{BS00} that we can acheive
\begin{displaymath}
d(V_i)\ge \frac{2m}{3}+\frac{k-1}{3}.
\end{displaymath}
Since we may need to apply \eqref{multi} to a hypergraph with repeated edges, 
it is vital we ensure there are no repeated
edges of size 1. 
As usual, we define the {\em restriction of a hypergraph $G$ to a subset
$U$ of its vertices} as the  multi-hypergraph with vertex set $U$, in which
the multiplicity of an  edge $e$ is $|{f\in E(G):f\cup U=e}|$.

We shall check that the number of vertices in $V\setminus U$ is at 
most 2 before restricting $G$ to $U$; since we
shall always start from a 3-uniform $G$ this will ensure no repeated edges 
of size 1. 

\section{The bound $c=\frac{3}{5}$}
We shall first give the basic argument, which proves the conjecture 
for hypergraphs which are above a certain size. We shall
then need to take more care in the details for small hypergraphs.
\begin{theorem}\label{211}
Let $G$ be a 3-uniform hypergraph with $m\ge 211$ edges. 
Then there exists a partition into three parts with each
part meeting at least $\frac{3}{5}m$ edges.
\end{theorem}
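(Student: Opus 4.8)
My plan is to argue by contradiction. Suppose $G$ admits no partition into three parts each meeting at least $\frac{3m}{5}$ edges; since $\frac{3m}{5}=\left(\frac{2}{3}-\frac{1}{15}\right)m$, this says exactly that $G$ cannot be split into three $\frac{1}{15}$-good sets, so Theorem~\ref{engine} with $\varepsilon=\frac{1}{15}$ supplies a partition $V=A\cup B\cup C$ with $A,B$ minimal $\frac{1}{15}$-good and $d(A,B,C)>\frac{11m}{5}$. Together with $d(C)<\frac{3m}{5}$ this forces $d(A),d(B)>\frac{3m}{5}$, and since $d_2(A)+d_2(B)\ge m-d(C)$ (every edge avoiding $C$ meets $A$ or $B$ in at least two vertices) we get $d(A)+d(B)+d_2(A)+d_2(B)>2m$, so after possibly swapping $A$ and $B$ we may assume $d(A)+d_2(A)>m$. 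The aim is to manufacture a good three-partition of $G$ out of this data, contradicting the hypothesis.

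The device for building the partition is Theorem~\ref{multi}, and I would first reduce to the following claim: \emph{it suffices to find a set $T\subseteq V$ with $|T|\le 2$, with $d(T)\ge\frac{3m}{5}$, and such that at most $\frac{2m}{5}-3$ edges of $G$ contain two vertices of $T$}. Indeed, given such a $T$, restrict $G$ to $U=V\setminus T$; because $|T|\le2$ and $G$ is $3$-uniform the restriction has no repeated edge of size $1$, so after replacing each surviving edge of size $3$ by a subedge of size $2$ we may apply Theorem~\ref{multi} to it. This produces a partition $U=V_1\cup V_2$; for $\{i,j\}=\{1,2\}$ a routine check shows that the number of edges of $G$ contained in $V_j\cup T$ is at most the number of edges of the restriction spanned by $V_j$ (counted with multiplicity), which Theorem~\ref{multi} bounds by $\frac{m-k}{3}+\frac{k+1}{2}$, where $k$ is the number of size-$1$ edges of the restriction, i.e. the number of edges of $G$ containing both vertices of $T$. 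Since $\frac{m-k}{3}+\frac{k+1}{2}\le\frac{2m}{5}$ precisely when $k\le\frac{2m}{5}-3$, each of $V_1,V_2$ misses at most $\frac{2m}{5}$ edges and so meets at least $\frac{3m}{5}$; $T$ does so too by hypothesis, so $V_1\cup V_2\cup T$ is the forbidden partition.

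It remains to locate such a $T$, and this is where the real work — and the constraint $m\ge 211$ — lies. If some vertex $v$ has $d(v)\ge\frac{3m}{5}$ then $T=\{v\}$ works (here $k=0$ and the arithmetic is valid for all $m\ge 8$), so I may assume $G$ has no $\frac{1}{15}$-good vertex; in particular $|A|,|B|\ge 2$. When $|A|=2$ (or symmetrically $|B|=2$), the pair $A=\{a_1,a_2\}$ is itself good, with $k=d_2(A)=d(a_1)+d(a_2)-d(A)$, and I would bound this using $d(a_i)<\frac{3m}{5}$ and $d(A)+d_2(A)=d(a_1)+d(a_2)$, passing to a different pair (a vertex of $A$ together with a vertex incident to many of the edges through both $a_1$ and $a_2$) if $d_2(A)$ turns out to exceed $\frac{2m}{5}-3$. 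The hard case — and the one I expect to need the full strength of the set-up — is when $G$ has no good vertex and both $A$ and $B$ have at least three vertices: then no pair inside $A$ or inside $B$ is good, and the plan is to show that the edges forcing $d(A),d(B)>\frac{3m}{5}$ and $d(A)+d_2(A)>m$ cannot all be concentrated on co-incident vertex pairs, producing \emph{some} pair $\{p,q\}$ with $d(\{p,q\})\ge\frac{3m}{5}$ of codegree at most $\frac{2m}{5}-3$, presumably by feeding the degree information back into the inequalities \eqref{acond} and \eqref{bcond} of Lemma~\ref{prep}. Accounting for the additive slack throughout (the $\frac{k+1}{2}$ of Theorem~\ref{multi}, the $-3$ above, and the corresponding losses in the $|A|=2$ analysis) is exactly what forces $m$ past $210$, leaving the finitely many smaller hypergraphs to be dealt with separately.
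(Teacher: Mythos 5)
Your framework is right — argue by contradiction, invoke Theorem~\ref{engine} with $\varepsilon=\tfrac1{15}$, and then try to close by applying Theorem~\ref{multi} to a suitable restriction — and your reduction ``it suffices to find $T$ with $|T|\le 2$, $d(T)\ge\tfrac{3m}{5}$, and at most $\tfrac{2m}{5}-3$ edges containing two vertices of $T$'' is a clean and correct reformulation of the target (the arithmetic $\tfrac{m-k}{3}+\tfrac{k+1}{2}\le\tfrac{2m}{5}\iff k\le\tfrac{2m}{5}-3$ checks out, as does the observation about size-$1$ edges of the restriction). But the proof has a genuine gap precisely where you signal it yourself: the step that produces $T$. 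You write ``the plan is to show'' and ``presumably by feeding the degree information back into the inequalities'' — that is a hope, not an argument, and it does not obviously work. Even in your $|A|=2$ branch the bound you actually have, $d(a_i)<\tfrac{3m}{5}$ and $d(A)>\tfrac{3m}{5}$, only gives $d_2(A)<\tfrac{3m}{5}$, nowhere near $\tfrac{2m}{5}-3$; your fix (``passing to a different pair'') is again unspecified.

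The paper resolves this differently, and the difference is where $m\ge 211$ enters. Since $d(A)\ge d(B)$ and $d(A)+d(B)>\tfrac{8m}{5}$, it splits on whether $d(A)\ge\tfrac{9m}{10}$. If so, minimality of $A$ forces $|A|\le 2$ and the stronger inequality $d_2(A)<\tfrac{3m}{10}$ (each $a\in A$ is the unique $A$-vertex of more than $\tfrac{3m}{10}$ edges), which is $\le\tfrac{2m}{5}-3$ for $m\ge 30$; that is exactly your $T=A$. If instead $\tfrac{8m}{10}<d(A)<\tfrac{9m}{10}$, then $d(B)>\tfrac{7m}{10}$, minimality gives $|A|\le 3$ and $|B|\le 6$, and \emph{no} attempt is made to find a good pair: instead, at most $\binom{9}{3}=84$ edges can avoid $C$, while $d(C)<\tfrac{3m}{5}$ forces more than $\tfrac{2m}{5}$ edges to avoid $C$, giving $m<210$ — contradiction. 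So the regime you call the ``hard case'' is not handled by exhibiting a low-codegree good pair at all; it is killed outright by a cardinality count against the hypothesis $m\ge 211$. Without that counting argument (or something replacing it), the proposal as written does not establish the theorem.
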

\begin{proof}
Suppose $G$ does not admit such a partition. 
Take $\varepsilon=\frac{1}{15}$ in Theorem \ref{engine}, 
and let $A,B,C$ be the partition
guaranteed. Then, by the result of the theorem,
\begin{displaymath}
d(A)+d(B)>\left(\frac{4}{3}+4\varepsilon\right)m=\frac{8}{5}m
\end{displaymath}
and $A,B$ are minimal $\varepsilon$-good sets, so no proper subset 
of $A$ or $B$ meets $\frac{3}{5}m$ edges.
Suppose without loss of generality that $d(A)\ge d(B)$. 
We distinguish two cases.

\vspace{5pt}
\textbf{Case 1.} $d(A)\ge\frac{9}{10}m$. 
By the minimality of $A$, for each vertex $a\in A$ there are more than
$\frac{3}{10}m$ edges meeting $A$ only at $a$. 
Hence any two vertices of $A$ between them meet more than 
$\frac{3}{5}m$ edges, and so cannot be a proper subset. Hence $|A|\le 2$. 

Suppose $|A|=2$ and $d_2(A)\ge\frac{3}{10}m$. 
Then there are at least $\frac{3}{10}m$ edges which meet $A$ only at $a_1$, 
and at least $\frac{3}{10}m$ edges which meet $A$ in both vertices, 
totalling $\frac{3}{5}m$ edges which meet $a_2$.
Since $A$ is minimal, this is impossible, so fewer than $\frac{3}{10}m$ 
edges meet $A$ in both vertices.

Now we consider $H$, the restriction of $G$ to $B\cup C$ 
(recall that we defined this to be a multi-hypergraph), 
and note that as $|A|\le 2$, $e(H)=e(G)$ and there can be no repeated 
edges of size 1; also, since fewer than $\frac{3}{10}m$
edges meet $A$ in two vertices, there are $k<\frac{3}{10}m$ edges of 
size 1 in $H$. Applying the result of
Theorem \ref{multi}, we may find a partition $D_1\cup D_2=B\cup C$ with
\begin{displaymath}
d_H(D_i)\ge\frac{2m-2k}{3}+\frac{k-1}{2}=\frac{2m}{3}-\frac{k+3}{6}
\end{displaymath}
and $k<\frac{3}{10}m$, so
\begin{displaymath}
d_H(D_i)\ge\frac{2m-2k}{3}-\frac{1}{2}-\frac{m}{20}
=\frac{3m}{5}+\frac{m}{60}-\frac{1}{2}.
\end{displaymath}
Since $m\ge30$, $A,D_1,D_2$ form a suitable partition.

\vspace{5pt}
\textbf{Case 2.} $\frac{9}{10}m\ge d(A)\ge\frac{8}{10}m$, 
so $d(B)\ge\frac{7}{10}m$. Then for each $a\in A$ there must be
more than $\frac{2}{10}m$ edges meeting $A$ only at $a$, 
and so any three vertices in $A$ between them meet more than $\frac{3}{5}m$
edges, so $|A|\le 3$. Similarly, $|B|\le 6$. 
Since $|A\cup B\le 9$, there can be at most $\binom{9}{3} = 84$ 
edges which do not meet
$C$, so $\frac{2}{5}m < 84$, which is false if $m\ge 211$. 
\end{proof}

\section{Small $m$}

In this section we will show the same result for all hypergraphs, not just those with $m\ge 211$. In order to do this, 
we shall be careful to take into account that intermediate expressions must be integer valued. Throughout, $G$ shall
be a 3-uniform hypergraph on vertex set $V$ with $m$ edges; we shall call a set {\em good} if it meets at least $\frac{3}{5}m$ edges, and
{\em bad} otherwise.

The following result is immediate, but we shall refer to it more than once.

\begin{lemma}\label{degree}
If there is a set $A\subset V$ with $d_2(A)=0$ and $d(A)\ge\frac{3}{5}m$ then there exists a partition of $V$ into three good sets. 
In particular, if $\Delta(G)\ge\frac{3}{5}m$ then such a partition exists.
\end{lemma}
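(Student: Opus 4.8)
The plan is to exhibit a direct construction. Suppose $A\subset V$ satisfies $d_2(A)=0$ and $d(A)\ge\frac{3}{5}m$, so every edge meeting $A$ does so in exactly one vertex. First I would apply Theorem~\ref{multi} to the restriction $H$ of $G$ to the set $B:=V\setminus A$. Every edge of $G$ either misses $A$ entirely, in which case it contributes an edge of size~$3$ (hence of size at least~$2$) to $H$, or meets $A$ in exactly one vertex, in which case it contributes an edge of size~$2$ to $H$. Thus $H$ has $m$ edges of size at least $2$ and \emph{no} edges of size $1$; after replacing each size-$3$ edge by a size-$2$ subedge, Theorem~\ref{multi} (applied with $k=0$) yields a partition $B=B_1\cup B_2$ with
\begin{displaymath}
d_H(B_i)\ge \frac{2m}{3}-\frac12 \ge \frac{3}{5}m
\end{displaymath}
for $i=1,2$, the last inequality holding once $m\ge 8$; small cases with $m\le 7$ are trivial since then $\frac{3}{5}m\le 4$ and one checks directly (or notes that $d_2(A)=0$ with $d(A)\ge\frac35 m$ already forces enough structure).

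Now set $C_1=A\cup B_1$ is \emph{not} what we want; instead take the three parts to be $A$, $B_1$, $B_2$. Each edge meeting $B_i$ in $H$ also meets $B_i$ in $G$, so $d_G(B_i)\ge d_H(B_i)\ge\frac35 m$, while $d_G(A)\ge\frac35 m$ by hypothesis. Hence $A,B_1,B_2$ is a partition of $V$ into three good sets, proving the first assertion.

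For the second assertion, suppose $\Delta(G)\ge\frac{3}{5}m$, and let $v$ be a vertex of maximum degree. Set $A=\{v\}$. Then $d_2(A)=0$ automatically (a single vertex cannot meet an edge in two vertices), and $d(A)=\deg(v)=\Delta(G)\ge\frac35 m$. Applying the first part with this $A$ gives the required partition.

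I do not expect any real obstacle here: the only thing to watch is the integrality/small-$m$ edge case in the bound $\frac{2m}{3}-\frac12\ge\frac35 m$, which fails only for $m$ below a small constant, and those cases are handled separately or vacuously. The substantive content is entirely in Theorem~\ref{multi}, which we are entitled to assume.
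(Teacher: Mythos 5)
Your proof is correct and takes essentially the same route as the paper: restrict $G$ to $V\setminus A$, observe that $d_2(A)=0$ guarantees this multigraph has no size-one edges, apply Theorem~\ref{multi} with $k=0$ to split $V\setminus A$ into two parts each meeting at least $\frac{2m}{3}-\frac12\ge\frac{3m}{5}$ edges, and combine with $A$. You are in fact slightly more careful than the paper, which states the bound from Theorem~\ref{multi} loosely as $\frac{2m}{3}$ and does not flag the small-$m$ regime; your observation that integrality (or a direct check) settles $m\le 7$ closes that gap, though your justification "since $\frac35 m\le 4$" is asserted rather than argued.
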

\begin{proof}
Suppose $d(A)\ge\frac{3}{5}m$. For each edge $e$ of $G$ pick a subedge of size 2 which does not meet $A$; since 
no edge contains more than one vertex in $A$, this is possible. This gives a multigraph $H$ on $V\setminus A$ with 
$m$ edges. By the result of (Theorem \ref{multi}), we can partition $V\setminus A$ into two parts $B$, $C$, each 
meeting at least $\frac{2}{3}m$ edges of $H$. 
\end{proof}

We shall also use another similar result.

\begin{lemma}\label{twodeg}
If $m\ge 10$ and $G$ has two vertices of degree $\lceil\frac{3}{5}m\rceil -1$, then there is a partition of $V$ into three good sets.
\end{lemma}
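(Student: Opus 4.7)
The plan is to exhibit an explicit good $3$-partition of the form
\[
V_1 = \{u, w_1\}, \qquad V_2 = \{v, w_2\}, \qquad V_3 = V \setminus \{u, v, w_1, w_2\},
\]
where $u, v$ are the two given vertices and $w_1, w_2$ are distinct auxiliary vertices in $V \setminus \{u, v\}$ to be chosen. First I invoke Lemma~\ref{degree}: if some vertex has degree at least $\lceil 3m/5 \rceil$ we are already done, so I may assume $d(u) = d(v) = \lceil 3m/5 \rceil - 1$ exactly.

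The governing observation is that $\{u, w\}$ is good as soon as $w$ lies in some edge not containing $u$, because then $d(\{u, w\}) \ge d(u) + 1 = \lceil 3m/5 \rceil$. Let $S_u$ be the set of $w \in V \setminus \{u, v\}$ lying in some edge that avoids $u$, and define $S_v$ symmetrically. The key structural step is the lower bound $|S_u| \ge 2$ (and symmetrically $|S_v| \ge 2$): every edge of $G$ not containing $u$ has all three of its vertices in the set $\{v\} \cup S_u$, so if $|S_u| \le 1$ this set has at most two elements, no such edge can exist, and therefore $d(u) = m$, contradicting $d(u) = \lceil 3m/5 \rceil - 1 < m$.

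Once $|S_u|, |S_v| \ge 2$ are in hand, choosing distinct $w_1 \in S_u$ and $w_2 \in S_v$ is routine: if $S_u = S_v$ take two distinct elements, and otherwise take an element of the symmetric difference as one $w_i$ and any element of the opposing set as the other. Then $V_1$ and $V_2$ are good by the governing observation. For $V_3$ I note that the only edges not meeting it are the (at most $\binom{4}{3} = 4$) triples contained in $\{u, v, w_1, w_2\}$, so $d(V_3) \ge m - 4$, and a direct check shows $m - 4 \ge \lceil 3m/5 \rceil$ precisely for $m \ge 10$ (with equality at $m = 10$). The only nontrivial step is the bound $|S_u| \ge 2$; the rest is inclusion-exclusion and a boundary arithmetic check that matches the hypothesis exactly.
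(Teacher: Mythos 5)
Your proof is correct and is essentially the paper's own argument: both construct the partition $\{u,w_1\}\cup\{v,w_2\}\cup(V\setminus\{u,v,w_1,w_2\})$ with $w_1$ taken from an edge avoiding $u$ and $w_2$ from an edge avoiding $v$, and both finish with the same $m-4\ge\lceil\frac{3}{5}m\rceil$ check. The only difference is bookkeeping — you establish $|S_u|,|S_v|\ge 2$ up front to guarantee distinct auxiliary vertices, while the paper picks them sequentially using the fact that each edge has three vertices.
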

\begin{proof}
Let $a$, $b$ be two such vertices. There is an edge which does not contain $a$; let $c$ be a vertex in that edge other than $b$. 
There is an edge which does not contain $b$, let $d$ be a vertex in that edge other than $a,c$. $\{a,c\}$ and $\{b,d\}$ each 
meet at least $\lceil\frac{3}{5}\rceil$ edges, so are good. 
$V\setminus \{a,b,c,d\}$ meets at least $m-4$ edges, as there are only four possible edges 
contained in $\{a,b,c,d\}$; since $m\ge10$, $m-4\ge\lceil\frac{3}{5}\rceil$, so $V\setminus \{a,b,c,d\}$ is also good.
\end{proof}

Suppose $V$ cannot be partitioned into three good sets. Then every partition has some part meeting 
at most $\left\lceil\frac{3}{5}m\right\rceil-1$ edges. 
Let $\delta=\frac{2}{3}-\frac{\lceil\frac{3}{5}m\rceil-1}{m}>\frac{2}{3}-\frac{3}{5}=\frac{1}{15}$. 
For any $\frac{1}{15}<\varepsilon<\delta$, $G$ has no tripartition into $\varepsilon$-good parts, 
and so the partition guaranteed by Theorem \ref{engine} has $d(A)+d(B)+d(C)>(2+3\varepsilon)m$. 
If there is no such partition with $d(A)+d(B)+d(C)\ge(2+3\delta)m$, then by taking
$\varepsilon$ suitably close to $\delta$ we obtain a contradiction. Hence Theorem \ref{engine} implies

\begin{corollary}\label{discrete}
Let $G$ be a 3-uniform hypergraph which cannot be partitioned into three
good sets. Then there is a partition $V=A\cup B\cup C$
such that $A$ and $B$ are minimal good sets and
\begin{displaymath}
d(A)+d(B)+d(C)\ge(2+3\delta)m=4m-3(\lceil\frac{3}{5}m\rceil-1)
\end{displaymath}
and since $C$ is bad,
\begin{eqnarray*}
d(A)+d(B)&\ge&(2+3\delta)m \\
&=&4m-4(\lceil\frac{3}{5}m\rceil-1) \label{many} \\
&>&\frac{8}{5}m \label{clean}
\end{eqnarray*}
\end{corollary}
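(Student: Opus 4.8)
The plan is to derive Corollary~\ref{discrete} from Theorem~\ref{engine} by a discretisation argument, exploiting that $G$ is finite and that the thresholds involved are integers. First I would set $g=\lceil\frac{3}{5}m\rceil-1$ --- the largest number of edges a bad set can meet --- so that $\delta=\frac{2}{3}-\frac{g}{m}$; since $g<\frac{3}{5}m$ this already gives $\delta>\frac{1}{15}$, and $(2+3\delta)m=4m-3g$ is an integer. The key observation I would record is that for every $\varepsilon$ with $\delta-\frac{1}{m}\le\varepsilon<\delta$ a vertex set is $\varepsilon$-good if and only if it is good: the number of edges it meets is an integer, that integer is at least $(\frac{2}{3}-\varepsilon)m$ exactly when it exceeds $g$ (because $(\frac{2}{3}-\varepsilon)m\in(g,g+1]$ for such $\varepsilon$), and exceeding $g$ is the same as being at least $\lceil\frac{3}{5}m\rceil$. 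In particular ``minimal $\varepsilon$-good'' then coincides with ``minimal good'', and since $G$ admits no partition into three good sets it admits none into three $\varepsilon$-good sets.

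Next I would pick any $\varepsilon$ in the nonempty interval $\bigl(\max(\frac{1}{15},\,\delta-\frac{1}{m}),\,\delta\bigr)$; such an $\varepsilon$ is at least $\frac{1}{15}$, so Theorem~\ref{engine} applies and delivers a partition $V=A_\varepsilon\cup B_\varepsilon\cup C_\varepsilon$ with $A_\varepsilon,B_\varepsilon$ minimal $\varepsilon$-good --- hence minimal good, by the observation above --- and $d(A_\varepsilon)+d(B_\varepsilon)+d(C_\varepsilon)>(2+3\varepsilon)m$. Because $V$ has only finitely many partitions, one fixed partition $(A,B,C)$ must arise for a sequence $\varepsilon_n$ in that interval with $\varepsilon_n\to\delta$; for this partition $d(A)+d(B)+d(C)>(2+3\varepsilon_n)m$ for all $n$, and letting $n\to\infty$ gives $d(A)+d(B)+d(C)\ge(2+3\delta)m$. (One can avoid the limit, since $(2+3\delta)m$ is an integer: a single $\varepsilon$ with $\max(\frac{1}{15},\,\delta-\frac{1}{3m})<\varepsilon<\delta$ already forces the integer $d(A)+d(B)+d(C)>(2+3\varepsilon)m>(2+3\delta)m-1$ to be at least $(2+3\delta)m$.) Here $A$ and $B$ are minimal good, hence good; if $C$ were good as well then $(A,B,C)$ would be a partition into three good sets, so $C$ is bad.

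Finally I would unwind the two reformulations in the statement: from $\delta=\frac{2}{3}-\frac{g}{m}$ one has $(2+3\delta)m=4m-3g=4m-3(\lceil\frac{3}{5}m\rceil-1)$; since $C$ is bad, $d(C)\le g$, so $d(A)+d(B)\ge(2+3\delta)m-d(C)\ge 4m-4g=4m-4(\lceil\frac{3}{5}m\rceil-1)$; and because $g<\frac{3}{5}m$ this last quantity exceeds $4m-\frac{12}{5}m=\frac{8}{5}m$. I expect the only genuinely delicate part to be the discretisation bookkeeping of the first two paragraphs: isolating an interval of admissible $\varepsilon$ immediately below $\delta$ on which ``$\varepsilon$-good'' collapses to ``good'' while staying $\ge\frac{1}{15}$ as Theorem~\ref{engine} requires, and legitimising the passage from the strict bound of that theorem to the non-strict bound at the endpoint $\varepsilon=\delta$ (either through the finiteness of the set of partitions or through the integrality of $(2+3\delta)m$). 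Everything after that is a direct invocation of Theorem~\ref{engine}.
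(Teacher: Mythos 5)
Your proposal is correct and matches the paper's argument: both pass to $\varepsilon$ just below $\delta$ so that $\varepsilon$-good coincides with good, invoke Theorem~\ref{engine}, and then upgrade the strict inequality to $\ge(2+3\delta)m$ via a discretisation (finiteness of partitions or integrality of $(2+3\delta)m$). Your write-up is more explicit than the paper's terse ``by taking $\varepsilon$ suitably close to $\delta$ we obtain a contradiction,'' in particular in pinning down the window $\delta-\frac{1}{m}\le\varepsilon<\delta$ needed for minimal $\varepsilon$-goodness to transfer to minimal goodness, but the underlying idea is the same.
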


We shall now be more careful about bounding the sizes of the sets $A$, $B$.

\begin{lemma}
In the partition guaranteed by Corollary \ref{discrete}, either $|A|=2$ or $|B|=2$.
\end{lemma}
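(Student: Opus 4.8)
The plan is to bound $|A|$ and $|B|$ from above using the minimality of $A$ and $B$ as good sets, exactly as in the proof of Theorem~\ref{211}, and then to show that these bounds, combined with the lower bound $d(A)+d(B)>\frac{8}{5}m$ from Corollary~\ref{discrete}, force one of the two sets to have size exactly~$2$ (note neither can have size~$1$, since a single vertex of degree $\ge\frac35m$ would, by Lemma~\ref{degree}, already give a tripartition into good sets). First I would set up the key estimate: since $A$ is a minimal good set, for each $a\in A$ the set $A\setminus\{a\}$ is bad, so at least $d(A)-(\lceil\frac35m\rceil-1)$ edges meet $A$ only in~$a$; summing over $a\in A$ and using that these edge-sets are disjoint gives $|A|\big(d(A)-\lceil\frac35m\rceil+1\big)\le m$, and symmetrically for~$B$. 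Writing $d(A)=\frac{3}{5}m+\alpha$, $d(B)=\frac{3}{5}m+\beta$ with $\alpha,\beta$ controlled (and $\alpha+\beta>\frac{2}{5}m$ up to the integer corrections from Corollary~\ref{discrete}), these become roughly $|A|\alpha\lesssim m$ and $|B|\beta\lesssim m$.

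The main work is then a case analysis on $\max\{|A|,|B|\}$, say WLOG $|A|\le|B|$, aiming to show $|A|=2$. Suppose $|A|\ge 3$; then also $|B|\ge 3$, and the two summed inequalities give $3\alpha\le m$ and $3\beta\le m$ (up to integer slack), hence $\alpha+\beta\le\frac{2}{3}m + O(1)$ — which is consistent, so this alone is not enough. The tighter route is: from $|A|\alpha\le m$ and $d(A)\ge d(B)$ not being assumed, instead use that $d(A)+d(B)>\frac85m$ means $\alpha+\beta>\frac{2}{5}m$, so at least one of $\alpha,\beta$ exceeds $\frac{m}{5}$; say $\alpha>\frac{m}{5}$, forcing $|A|\alpha\le m$ to give $|A|\le 5$, and more carefully $|A|\le 4$ once the integer corrections in Corollary~\ref{discrete} (the $-4(\lceil\frac35m\rceil-1)$ term) are fed in. Pushing the arithmetic — the lower bound on $d(A)+d(B)$ is genuinely a bit above $\frac85m$ because of the ceiling — should drive both $|A|$ and $|B|$ small enough that, together with $|A|\le|B|$, the product constraint $|A|\,|B|\le\binom{|A\cup B|}{\text{something}}$ on edges inside $A\cup B$ (at most $\binom{|A|+|B|}{3}$ edges miss $C$, yet $\ge\frac25 m$ edges miss $C$) rules out $|A|\ge 3$ for all $m$ in the relevant range; for the genuinely small $m$ not covered this way one falls back on Lemmas~\ref{degree} and~\ref{twodeg}, which eliminate the configurations where a vertex has degree $\ge\frac35m$ or two vertices have degree $\lceil\frac35m\rceil-1$.

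The step I expect to be the real obstacle is making the integer bookkeeping tight enough: the whole point of this section is that $\frac35m$, $\lceil\frac35m\rceil-1$, $(2+3\delta)m$ are integers satisfying exact (not asymptotic) inequalities, and the argument of Theorem~\ref{211} wastes a constant factor that is affordable only for $m\ge 211$. So I would track $\lceil\frac35m\rceil-1$ symbolically throughout rather than replacing it by $\frac35m$, keep the exact form $d(A)+d(B)\ge 4m-4(\lceil\tfrac35m\rceil-1)$, and resist rounding until the final comparison; the delicate point is to show that whenever $|A|,|B|\ge 3$ the bound $\binom{|A|+|B|}{3}\ge\frac25 m$ fails, which needs $|A|+|B|$ pinned down to within one or two, and that in turn needs the product inequalities $|A|(d(A)-\lceil\frac35m\rceil+1)\le m$ exploited simultaneously for $A$ and $B$ rather than one at a time. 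Once $|A|+|B|$ is bounded by a small constant, the residual small-$m$ cases are finite and handled by the two degree lemmas.
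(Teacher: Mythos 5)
Your plan, as you yourself flag, does not close. The route you sketch is essentially that of Theorem \ref{211}, Case 2: bound $|A|$ and $|B|$ by small constants from minimality and then argue that $\binom{|A|+|B|}{3}$ is too small to contain the $>\frac{2}{5}m$ edges missing $C$. But that style of argument degrades as $m$ shrinks --- in Theorem \ref{211} it needed $m\ge 211$ --- and the bounds you derive are too weak: from $|A|\alpha\le m$ and $\alpha>\frac{m}{5}$ you only get $|A|\le 4$ or so, and a comparable weak bound on $|B|$, which leaves $\binom{|A|+|B|}{3}$ comfortably larger than $\frac{2}{5}m$ for moderate $m$. Punting the leftover cases to Lemmas \ref{degree} and \ref{twodeg} is not justified either: those lemmas only rule out a vertex of degree at least $\frac{3}{5}m$, or two vertices of degree exactly $\lceil\frac{3}{5}m\rceil-1$, and neither says anything against, say, $|A|=|B|=3$ with smaller maximum degree. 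So what you have is a plan with a genuine gap, and the gap is precisely where you expected it.

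The idea you are missing is to keep $d_2(A)$ inside the minimality inequality instead of throwing it away. You wrote $|A|(d(A)-\lceil\frac{3}{5}m\rceil+1)\le m$, which bounds the edges meeting $A$ in exactly one vertex by $m$. But those edges are disjoint from the $d_2(A)$ edges meeting $A$ at least twice, and together they account for $d(A)$, not $m$. Keeping that term gives, for $|A|\ge 3$,
\begin{displaymath}
d(A)\ \ge\ 3\Bigl(d(A)-\frac{3}{5}m\Bigr)+d_2(A),\qquad\text{i.e.}\qquad 2d(A)+d_2(A)\le \frac{9}{5}m.
\end{displaymath}
Now pair this with the observation (which you gesture at but do not use this way) that every edge avoiding $C$ meets $A$ or $B$ at least twice, so $d_2(A)+d_2(B)\ge m-d(C)>\frac{2}{5}m$. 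Adding this to $2\bigl(d(A)+d(B)\bigr)>\frac{16}{5}m$ from Corollary \ref{discrete} gives
\begin{displaymath}
\bigl(2d(A)+d_2(A)\bigr)+\bigl(2d(B)+d_2(B)\bigr)>\frac{18}{5}m,
\end{displaymath}
so after relabelling one of the two bracketed quantities exceeds $\frac{9}{5}m$; by the first display that set has at most $2$ elements, and Lemma \ref{degree} rules out size $1$. This gives the conclusion for all $m$ in one stroke, with no case split, no bound on both sets simultaneously, and no small-$m$ fallback; the integer bookkeeping you were worried about simply never arises here.
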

\begin{proof}
Since any edge which does not meet $C$ meets either $A$ or $B$ in at least two places, 
\begin{displaymath}
d_2(A)+d_2(B)\ge m-d(C) >\frac{2}{5}m,
\end{displaymath}
and so, recalling \eqref{clean}, 
\begin{displaymath}
2d(A)+2d(B)+d_2(A)+d_2(B)>\frac{18}{5}m.
\end{displaymath}
Without loss of generality, assume $2d(A)+d_2(A)\ge 2d(B)+d_2(B)$, so that
\begin{displaymath}
2d(A)+d_2(A)>\frac{9}{5}m.
\end{displaymath}
Since $A$ is minimally good, for each $a\in A$ there are more than $d(A)-\frac{3}{5}m$ edges which meet $A$ only at $a$. There are also $d_2(A)$
edges meeting $A$ in more than one vertex, and so if $|A|\ge 3$
\begin{displaymath}
d(A)\ge 3(d(A)-\frac{3}{5}m)+d_2(A),
\end{displaymath}
contradicting the previous inequality. Thus $|A|\le 2$, and Lemma \ref{degree} imples that $|A|=2$.
\end{proof}

\begin{lemma}\label{upb}In the partition guaranteed by Corollary \ref{discrete}, reordering $A, B$ if necessary so that $|A|=2$,
\begin{displaymath}
d(A)\le 8\lceil\frac{3}{5}m\rceil -4m -5.
\end{displaymath}
\end{lemma}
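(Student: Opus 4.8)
The goal is an upper bound on $d(A)$ when $A=\{a_1,a_2\}$ is a minimal good set in the partition from Corollary \ref{discrete}. I would start from the observation made in the proof of Theorem \ref{211} (Case 1): since $A$ is minimal good, for each $i$ the edges meeting $A$ only at $a_i$ number at least $d(A)-(\lceil\tfrac{3}{5}m\rceil-1)$; call this quantity $\alpha$. If in addition $A$ meets an edge in both vertices, those $d_2(A)$ edges are charged to $a_2$ along with the $\alpha$ edges meeting $A$ only at $a_1$, so $\{a_2\}$ meets at least $\alpha+d_2(A)$ edges; minimality of $A$ forces $\alpha+d_2(A)\le\lceil\tfrac{3}{5}m\rceil-1$. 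Writing $d(A)=2\alpha+d_2(A)$ (every edge meeting $A$ meets it only at $a_1$, only at $a_2$, or in both — and the first two classes have size exactly $\ge\alpha$; here one should check the counts are exact, i.e.\ $d(A)-\lceil\tfrac{3}{5}m\rceil+1$ is realised, which follows because otherwise some singleton would already be good), we get $d(A)=2\alpha+d_2(A)\le\alpha+(\lceil\tfrac{3}{5}m\rceil-1)$, hence $\alpha\le\lceil\tfrac{3}{5}m\rceil-1$ and $d(A)\le 2(\lceil\tfrac{3}{5}m\rceil-1)=2\lceil\tfrac{3}{5}m\rceil-2$. This is weaker than the claimed $8\lceil\tfrac{3}{5}m\rceil-4m-5$ in general, so a crude argument is not enough — the hypothesis that $G$ has no good tripartition, together with the strong bound $d(A)+d(B)>4m-4(\lceil\tfrac{3}{5}m\rceil-1)$ from Corollary \ref{discrete}, must be used.

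The key idea I would pursue is to play $d(A)$ off against $d(B)$ using Lemma \ref{twodeg}. Suppose for contradiction that $d(A)\ge 8\lceil\tfrac{3}{5}m\rceil-4m-4$. From Corollary \ref{discrete}, $d(B)>4m-4(\lceil\tfrac{3}{5}m\rceil-1)-d(A)$. Substituting the assumed lower bound on $d(A)$ would give an \emph{upper} bound $d(B)<4m-4\lceil\tfrac{3}{5}m\rceil+4-(8\lceil\tfrac{3}{5}m\rceil-4m-4)=8m-12\lceil\tfrac{3}{5}m\rceil+8$; more to the point, the large value of $d(A)$ combined with minimality should force $a_1$ and $a_2$ each to have very high degree — roughly, $\{a_i\}$ must meet at least $d(A)-(\lceil\tfrac{3}{5}m\rceil-1)$ edges (the $\alpha$ above, now large). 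If this $\alpha$ reaches $\lceil\tfrac{3}{5}m\rceil-1$, then $a_1$ and $a_2$ are two vertices of degree $\ge\lceil\tfrac{3}{5}m\rceil-1$; if moreover their degrees are \emph{exactly} $\lceil\tfrac{3}{5}m\rceil-1$ we invoke Lemma \ref{twodeg} for a contradiction (and if some degree is $\ge\lceil\tfrac{3}{5}m\rceil=\Delta(G)\ge\tfrac35 m$, Lemma \ref{degree} applies instead, after checking $d_2$). So the inequality $d(A)\le 8\lceil\tfrac{3}{5}m\rceil-4m-5$ should emerge from requiring $\alpha\le\lceil\tfrac{3}{5}m\rceil-2$ (else Lemma \ref{twodeg}/\ref{degree} kills us) \emph{combined} with an inequality of the form $d(A)\le\alpha+(\text{something involving }m)$ coming from $d_2(A)$ being controlled by $m-d(C)$ and the count $d_2(A)+d_2(B)>\tfrac25 m$.

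Concretely, the chain I expect is: $d(A)=2\alpha+d_2(A)$ with $\alpha+d_2(A)\le\lceil\tfrac35 m\rceil-1$ (minimality applied to $\{a_2\}$, and symmetrically to $\{a_1\}$), giving $d_2(A)\le\lceil\tfrac35 m\rceil-1-\alpha$ and hence $d(A)\le\alpha+\lceil\tfrac35 m\rceil-1$; then bound $\alpha$. The bound on $\alpha$ must come from the other side: the $2\alpha$ edges meeting $A$ in exactly one vertex, plus the $d_2(B)$ edges meeting $B$ twice but not meeting $C$, plus the structure forced on $B$, together with $d(B)>\tfrac85 m - d(A)$ and $d(B)\le m$, should pin down $\alpha\le 7\lceil\tfrac35 m\rceil-5m-4$ or similar, so that $d(A)\le\alpha+\lceil\tfrac35 m\rceil-1\le 8\lceil\tfrac35 m\rceil-4m-5$. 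The main obstacle — and the step I would spend real effort on — is extracting this sharp bound on $\alpha$: it requires simultaneously tracking which edges are double-covered by $A$ versus $B$ versus singly covered, keeping everything integer-valued (all the $\lceil\cdot\rceil$'s and the $\ge$ in minimality are what separate $4m-4$ from $4m-5$ in the final constant), and ruling out the degenerate configurations via Lemmas \ref{degree} and \ref{twodeg}. I would organise it as: (1) set up $\alpha$, $d_2(A)$; (2) get $d(A)\le\alpha+\lceil\tfrac35 m\rceil-1$ from minimality applied to both singletons; (3) use Corollary \ref{discrete} plus $d(B)\le m$ and $d_2(A)+d_2(B)>\tfrac25 m$ to bound $\alpha$ from above; (4) check the boundary case $\alpha=\lceil\tfrac35 m\rceil-1$ separately with Lemma \ref{twodeg}; (5) assemble.
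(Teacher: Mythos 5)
Your steps (1) and (2) correctly recover the first move of the paper's proof: from minimality of $A$ and $|A|=2$ you get, writing $x_i$ for the number of edges meeting $A$ only at $a_i$, that $x_i\ge d(A)-\lceil\tfrac35 m\rceil+1$, and hence
$d(A)\ge 2\bigl(d(A)-\lceil\tfrac35 m\rceil+1\bigr)+d_2(A)$, i.e.\ $d_2(A)\le 2\lceil\tfrac35 m\rceil-d(A)-2$. (A small point: your parenthetical claim that the counts $x_1,x_2$ are each \emph{exactly} $d(A)-\lceil\tfrac35 m\rceil+1$ does not follow from minimality and is not needed; you only need the inequality.)

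The gap is in step (3): you propose to convert the smallness of $d_2(A)$ into the bound on $d(A)$ using $d_2(A)+d_2(B)>\tfrac25 m$ together with Corollary \ref{discrete} and Lemma \ref{twodeg}, but these tools do not close. Plugging $d(A)\ge 8\lceil\tfrac35 m\rceil-4m-4$ into $d_2(A)\le 2\lceil\tfrac35 m\rceil-d(A)-2$ gives $d_2(A)\le 4m+2-6\lceil\tfrac35 m\rceil$, and then $d_2(B)>\tfrac25 m-d_2(A)\ge \tfrac25 m-4m-2+6\lceil\tfrac35 m\rceil\ge -2$, which is vacuous. Your boundary fallback via Lemma \ref{twodeg} only rules out $d(A)\ge 2\lceil\tfrac35 m\rceil-2$ (and only for $m\ge 10$), and $2\lceil\tfrac35 m\rceil-3>8\lceil\tfrac35 m\rceil-4m-5$ for every $m\ge 8$, so that bound is strictly weaker than the target in the whole range where Lemma \ref{twodeg} even applies. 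The essential idea you are missing is the one the paper actually uses: once $d_2(A)$ is small, pass to $H$, the restriction of $G$ to $B\cup C$. Since $|A|=2$, $H$ has no repeated size-1 edges and has $k\le d_2(A)$ edges of size $1$, so Theorem \ref{multi} gives a bipartition $D_1\cup D_2$ of $B\cup C$ with $d_H(D_i)\ge \tfrac{2m}{3}-\tfrac{k+3}{6}\ge \lceil\tfrac35 m\rceil-\tfrac56$; integrality of $d_H(D_i)$ then forces $d_H(D_i)\ge\lceil\tfrac35 m\rceil$, so $A,D_1,D_2$ is a good partition, contradiction. It is this appeal to Theorem \ref{multi} (plus the final rounding) that produces the sharp constant $8\lceil\tfrac35 m\rceil-4m-5$, not a refined count of $\alpha$, $d_2(A)$ and $d_2(B)$.
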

\begin{proof}
Since $A$ is minimally good, for each $a\in A$ there are more than $d(A)-\frac{3}{5}m$, and so at least $d(A)-\lceil\frac{3}{5}m\rceil+1$, 
edges which meet $A$ only at $a$, and so
\begin{displaymath}
d(A)\ge 2(d(A)-\lceil\frac{3}{5}m\rceil+1)+d_2(A).
\end{displaymath}
If $d(A)\ge 8\lceil\frac{3}{5}m\rceil -4m -4$,
\begin{eqnarray*}
d_2(A)&\le& 2\lceil\frac{3}{5}m\rceil-d(A)-2 \\
&\le& 4m+2-6\lceil\frac{3}{5}m\rceil.
\end{eqnarray*}
Consider $H$, the multi-hypergraph obtained by restricting $G$ to $B\cup C$. 
Note that there can be no repeated edges of size 1 and at most $d_2(A)$ edges of size 1 in $H$. 
Writing $k$ for the number of such edges, and applying the result of Theorem \ref{multi}, 
we may find a partition $D_1\cup D_2=B\cup C$ with
\begin{displaymath}
d_H(D_i)\ge\frac{2m-2k}{3}+\frac{k-1}{2}=\frac{2m}{3}-\frac{k+3}{6},
\end{displaymath}
and $k\le 4m+2-6\lceil\frac{3}{5}m\rceil$, so
\begin{displaymath}
d_H(D_i)\ge\frac{2m}{3}-\frac{1}{2}-\frac{k}{6}=\lceil\frac{3}{5}m\rceil-\frac{5}{6}.
\end{displaymath}
Since $d_H(D_i)$ must be an integer, it is at least $\lceil\frac{3}{5}m\rceil$, so $A,D_1,D_2$
contradicts our assumption that $G$ does not have a good partition.
\end{proof}

We shall now bound $|B|$.
\begin{lemma}\label{upbb}
In the partition guaranteed by Corollary \ref{discrete}, reordering $A, B$ if necessary so that $|A|=2$, $|B|\le3$.
\end{lemma}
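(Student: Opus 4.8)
\textbf{Proof plan for Lemma \ref{upbb}.}

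The plan is to combine the upper bound on $d(A)$ from Lemma \ref{upb} with the main inequality $d(A)+d(B)>\frac{8}{5}m$ from Corollary \ref{discrete} to force $d(B)$ to be large, and then use the minimality of $B$ together with a counting argument on the edges meeting $B$ in a single vertex. Specifically, since $A$ is minimally good and $|A|=2$, the argument of Lemma \ref{upb} (or its precursor) shows $d(A)\le 8\lceil\frac{3}{5}m\rceil-4m-5$; subtracting from $d(A)+d(B)\ge 4m-4(\lceil\frac{3}{5}m\rceil-1)$ yields a lower bound on $d(B)$, roughly $d(B)\gtrsim 4\lceil\frac{3}{5}m\rceil-\tfrac{?}{}$, i.e. $d(B)$ exceeds $\tfrac{3}{5}m$ by a definite margin $\beta:=d(B)-(\lceil\frac{3}{5}m\rceil-1)$ that I will compute explicitly in terms of $m$ and $\lceil\frac{3}{5}m\rceil$.

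Next, since $B$ is a minimal good set, for each vertex $b\in B$ there are at least $d(B)-\lceil\frac{3}{5}m\rceil+1=\beta$ edges meeting $B$ only at $b$; these edge-sets are disjoint for distinct $b$. Together with the $d_2(B)$ edges meeting $B$ in two or more vertices, this gives $d(B)\ge |B|\cdot\beta+d_2(B)\ge |B|\cdot\beta$. If $|B|\ge 4$ this forces $d(B)\ge 4\beta$, and substituting the computed values of $\beta$ and the lower bound on $d(B)$ should produce a numerical contradiction — the point being that $d(B)\le m$ while $4\beta$ will be shown to exceed $m$ once the estimates are assembled (using $\lceil\frac{3}{5}m\rceil>\frac{3}{5}m$ and, if needed, a separate easy check for the finitely many smallest $m$, invoking Lemmas \ref{degree} and \ref{twodeg} to dispose of degenerate cases where a vertex already has degree close to $\frac{3}{5}m$).

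The main obstacle I anticipate is purely arithmetic bookkeeping: because everything is phrased with ceilings, the quantity $\beta$ and the bound $4\beta$ versus $m$ need to be checked to be in the right direction for \emph{all} $m$, not just asymptotically, and the ceiling can shift things by up to $1$ in either direction. I expect the inequality $4\beta>m$ to hold comfortably for large $m$ but to be tight or to fail for a short range of small $m$; those cases I would handle separately, either by the elementary Lemmas \ref{degree} and \ref{twodeg}, or by noting that for small $m$ a $|B|\ge4$ set together with $|A|=2$ already contains too few possible edges to leave $C$ bad (a $\binom{|A\cup B|}{3}$-count as in the proof of Theorem \ref{211}). No single step is deep; the care is in making the constants line up exactly.
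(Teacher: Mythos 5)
Your chain of inequalities is headed in the right direction but leaves a genuine gap, and the way you've characterised the difficult cases is misleading. Writing $q=\lceil\frac{3}{5}m\rceil$: from Lemma~\ref{upb} and Corollary~\ref{discrete} you get $d(B)\ge (4m-4q+4)-(8q-4m-5)=8m-12q+9$, and from $|B|\ge 4$ and minimality you get $d(B)\ge 4(d(B)-q+1)$, i.e.\ $d(B)\le\frac{4q-4}{3}$. These are inconsistent exactly when $24m+31>40q$. Since $q\le\frac{3m+4}{5}$, one always has $40q\le 24m+32$, so your argument \emph{does} give a contradiction for four of the five residue classes of $m$ modulo $5$ --- but it gives no contradiction at all when $m\equiv 2\pmod 5$, where $40q=24m+32$. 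This is not ``a short range of small $m$'' that you can clean up with Lemmas~\ref{degree} and~\ref{twodeg} or a $\binom{|A\cup B|}{3}$ count: it is an infinite arithmetic progression, and the binomial count cannot even be applied until you have first bounded $|B|$, which is precisely what you are trying to prove. (Also note that the literal claim ``$4\beta>m$'' is not what your algebra establishes; the contradiction actually comes from comparing the upper and lower bounds on $d(B)$.)

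The ingredient you have dropped, and which the paper uses to close exactly this gap, is the constraint on $d_2$. When you write $d(B)\ge 4\beta+d_2(B)\ge 4\beta$ you throw away $d_2(B)$ as merely nonnegative, but in fact $d_2(A)+d_2(B)$ has a useful \emph{lower} bound: every edge that misses $C$ must meet $A$ or $B$ in at least two vertices, so $d_2(A)+d_2(B)\ge m-d(C)>m-q$. Keeping the $d_2$ terms, the minimality inequalities read $d_2(A)\le 2q-d(A)-2$ and $d_2(B)\le 4q-3d(B)-4$; adding these and combining with $d(A)+d(B)\ge 4m-4q+4$ and the upper bound $d(A)\le 8q-4m-5$ yields $d_2(A)+d_2(B)\le 34q-20m-28$, hence $d(C)\ge m-(34q-20m-28)=21m+28-35q\ge q$ for every $m$ (since $35q\le 21m+28$), contradicting that $C$ is bad. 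That is, the paper derives the contradiction on $d(C)$ rather than on $d(B)$, and it is the two-sided control of $d_2(A)+d_2(B)$ --- not an appeal to $d(B)\le m$ --- that makes the arithmetic close uniformly in $m$. Your plan needs this extra inequality to go through; without it the case $m\equiv 2\pmod 5$ is unresolved.
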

\begin{proof}
Suppose $|B|\ge4$. By Theorem \ref{discrete},
\begin{displaymath}
d(A)+d(B)\ge 4m-4(\lceil\frac{3}{5}m\rceil-1).
\end{displaymath}
Since $A$ is minimally good, for each $a\in A$ there at least $d(A)-\lceil\frac{3}{5}m\rceil+1$ edges which meet $A$ only at $a$, and so
\begin{equation}
d(A)\ge 2(d(A)-\lceil\frac{3}{5}m\rceil+1)+d_2(A). \label{zx}
\end{equation}
Similarly,
\begin{equation}
d(B)\ge 4(d(B)-\lceil\frac{3}{5}m\rceil+1)+d_2(B). \label{zxc}
\end{equation}
\eqref{zx} and \eqref{zxc} together give
\begin{eqnarray*}
d_2(A)+d_2(B)&\le& 6\lceil\frac{3}{5}m\rceil-d(A)-3d(B)-6 \\
&=& 6\lceil\frac{3}{5}m\rceil+2d(A)-3(d(A)-d(B))-6 \\
&\le& 18\lceil\frac{3}{5}m\rceil+2d(A)-12m-18.
\end{eqnarray*}
By Lemma \ref{upb}, therefore,
\begin{displaymath}
d_2(A)+d_2(B)\le 34\lceil\frac{3}{5}m\rceil-20m-28.
\end{displaymath}
Since any edge not meeting $C$ meets either $A$ or $B$ in at least two vertices,
\begin{eqnarray*}
d(C)-\lceil\frac{3}{5}m\rceil&\ge& m-d_2(A)-d_2(B)-\lceil\frac{3}{5}m\rceil \\
&\ge& 21m+28-35\lceil\frac{3}{5}m\rceil \\
&\ge& 21m+28-35\left(\frac{3}{5}m+\frac{4}{5}\right) =0.
\end{eqnarray*}
Again, this contradicts our assumption that $G$ does not have a good partition.
\end{proof}

We now have a much better bound than the one given by Theorem \ref{211}.

\begin{corollary}\label{large}
If $m\ge 25$, there is a partition of $V$ into three good sets.
\end{corollary}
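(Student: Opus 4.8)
The plan is simply to read off a contradiction from the structural information already assembled, so the proof will be very short. Assume for contradiction that $m \ge 25$ and that $V$ has no partition into three good sets. Then Corollary~\ref{discrete} supplies a partition $V = A \cup B \cup C$ in which $A$ and $B$ are minimal good sets while $C$ is bad; in particular $d(C) \le \lceil\frac{3}{5}m\rceil - 1 < \frac{3}{5}m$. By the (unnumbered) lemma asserting that one of $A,B$ has exactly two vertices, combined with Lemma~\ref{upbb}, we may reorder the two good parts so that $|A| = 2$ and $|B| \le 3$; as $A$ and $B$ are disjoint, this gives $|A \cup B| \le 5$.

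Now count the edges missed by $C$. Every edge of $G$ not meeting $C$ is one of the at most $\binom{5}{3} = 10$ three-element subsets of $A \cup B$, so $m - d(C) \le 10$, i.e.\ $d(C) \ge m - 10$. Together with $d(C) < \frac{3}{5}m$ this yields $m - 10 < \frac{3}{5}m$, hence $m < 25$, contradicting $m \ge 25$. Therefore the required partition exists.

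I expect no genuine obstacle here: all the difficulty has already been absorbed into Lemmas~\ref{upb} and~\ref{upbb} (and the preceding lemma bounding $|A|$), which pinned down the sizes of the two good parts, after which only an elementary edge count remains. The one point deserving a word of care is that the $\binom{5}{3}$ bound relies on $G$ being a simple $3$-uniform hypergraph, so that the edges inside $A \cup B$ really are distinct $3$-sets; since $G$ is $3$-uniform throughout this section, this is automatic. (One could alternatively invoke Theorem~\ref{211} to handle $m \ge 211$, but the argument above already covers all $m \ge 25$ in one step, so that case split is unnecessary.)
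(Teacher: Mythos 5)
Your proof is correct and is essentially the paper's own argument: invoke the size bounds from the lemmas following Corollary~\ref{discrete} to get $|A\cup B|\le 5$, then observe that at most $\binom{5}{3}=10$ edges can avoid $C$, so $d(C)\ge m-10\ge\frac{3}{5}m$ once $m\ge 25$. The only cosmetic difference is that you more precisely attribute the size bounds to the unnumbered lemma (giving $|A|=2$) together with Lemma~\ref{upbb}, whereas the paper cites Lemmas~\ref{upb} and~\ref{upbb}, but the substance is identical.
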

\begin{proof}Suppose not. Lemmas \ref{upb} and \ref{upbb} then imply that there is a partition with $A$, $B$ being good sets
and $|A\cup B|\le 5$. But then at most 10 edges lie entirely within $A\cup B$, so $d(C)\ge m-10 \ge\frac{3}{5}m$
when $m\ge 25$.
\end{proof}

\begin{lemma}\label{rest}
If $m\le 24$ and $m\neq 7$, there is a partition of $V$ into three good sets.
\end{lemma}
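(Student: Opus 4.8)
The plan is to reduce to a bounded check by combining the structural results already available. First I would invoke Corollary~\ref{large}-style reasoning, but sharpened: assuming $G$ has no good tripartition, Corollary~\ref{discrete} together with Lemmas~\ref{upb} and~\ref{upbb} gives a partition $(A,B,C)$ with $A,B$ minimal good sets, $|A|=2$ and $|B|\le 3$, so $|A\cup B|\le 5$. Then every edge not meeting $C$ lies inside $A\cup B$, giving $d(C)\ge m - \binom{5}{3} = m-10$. For a good partition to fail we therefore need $m-10 < \lceil\tfrac{3}{5}m\rceil$, i.e. $m \le 24$ (the threshold already used), so the contradiction in Corollary~\ref{large} evaporates only in this finite range, which is exactly what we must handle by hand. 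The bound $d(A)\le 8\lceil\tfrac35 m\rceil - 4m - 5$ from Lemma~\ref{upb}, combined with $d(A)\ge d(B)$ (reordering within the $|B|=2$ case) and $d(A)+d(B) > \tfrac85 m$, pins $d(A),d(B)$ into a short interval for each value of $m$, which cuts the casework further.

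Next I would exploit Lemma~\ref{degree} and Lemma~\ref{twodeg}: if $\Delta(G)\ge\lceil\tfrac35 m\rceil$, or if two vertices have degree $\lceil\tfrac35 m\rceil - 1$ and $m\ge 10$, we are done. So in the remaining configurations every vertex has degree at most $\lceil\tfrac35 m\rceil - 1$, with at most one vertex attaining that value. Since $|A|=2$ and $A$ is minimal good, both vertices of $A$ have degree $\ge d(A)-d_2(A)$ which is forced to be large; pushing the minimality of $A$ (each $a\in A$ meets more than $d(A)-\tfrac35 m$ edges uniquely, and $d_2(A)$ edges meet $A$ twice) against the degree cap will in most cases already contradict $d(A) > \tfrac45 m$. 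What survives is a very small list of $(m, |A\cup B|, d(A), d(B), d(C), d_2(A), d_2(B))$ profiles, and for each I would write down the at most $\binom{5}{3}$ possible internal edges of $A\cup B$ explicitly and check that some reshuffling of those $\le 5$ vertices, or an application of Theorem~\ref{multi} to the restriction $H$ of $G$ to $B\cup C$ (valid since $|A|=2$, so no repeated size-$1$ edges), yields a good tripartition. For the $m\le 9$ tail I would instead argue directly from $\Delta(G) < \lceil\tfrac35 m\rceil$ and the trivial bound $m \le \binom{|V|}{3}$: a hypergraph this small has few vertices, and one checks the handful of degree sequences by hand, this being where the excluded value $m=7$ must genuinely be an exception.

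The main obstacle I expect is not any single clever idea but controlling the explosion of small cases: once $|A\cup B|\le 5$ and $m$ is fixed there are still many ways to distribute the $\le 10$ internal edges and the remaining $\ge m-10$ edges meeting $C$, and one must verify for each that no admissible relabelling of the five special vertices beats the threshold. The leverage that keeps this finite is the integrality squeeze in Lemma~\ref{upb} (the ``$d_H(D_i)\ge\lceil\tfrac35 m\rceil-\tfrac56$ hence $\ge\lceil\tfrac35 m\rceil$'' step) together with $d(A)+d(B)>\tfrac85 m$ and $d(C)\ge m-10$: for each $m\in\{10,\dots,24\}$ these force $d_2(A)+d_2(B)$ and hence the number of internal edges into a window of size $O(1)$, typically just one or two feasible values, so the verification reduces to inspecting a genuinely small table rather than an open-ended search. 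I would present it as: state the structural restrictions, deduce the numeric windows for each $m$, and then dispatch the residual profiles in a single short paragraph, flagging $m=7$ as the unique genuinely bad instance (realised by the $5$-edge-on-$7$-vertices example already exhibited after Theorem~\ref{engine}, scaled or as-is).
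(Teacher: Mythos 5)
Your overall strategy matches the paper's: start from Corollary~\ref{discrete} and Lemmas~\ref{upb}, \ref{upbb} to pin $|A|=2$, $|B|\le3$, then combine upper and lower bounds on $d(A)$ and exploit integrality to turn the problem into a finite check, invoking Lemma~\ref{twodeg} to kill residual cases. But there are two genuine gaps.

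First, the arithmetic that actually closes the finite check is not carried out, and it is not the kind you gesture at. You propose to ``pin $d(A),d(B)$ into a short interval'' by reordering so $d(A)\ge d(B)$ in the $|B|=2$ case, but that is not how the paper obtains an upper bound on $d(B)$: the decisive estimate comes from counting, for each $b_i\in B$, the number $x_i$ of edges meeting $B$ only at $b_i$, and using minimality of $B$ together with the lower bound $d_2(B)\ge m-\lceil\frac{3m}{5}\rceil-2$ (resp.\ $-1$) to force $d(B)\le 2\lceil\frac{3m}{5}\rceil-\lfloor\frac{m-1}{2}\rfloor-1$ when $|B|=3$, and $d(B)\le 3\lceil\frac{3m}{5}\rceil-m-1$ when $|B|=2$. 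Feeding these into Corollary~\ref{discrete} and comparing against Lemma~\ref{upb} gives an explicit inequality (the paper's \eqref{qwe} and its $|B|=2$ analogue) whose failure is what eliminates all $m\in\{3,\dots,24\}$ except $7,12,17$ in Case~1 and all except $7$ in Case~2. Without this $x_i$ calculation the ``small table'' you invoke has no entries; the integrality squeeze in Lemma~\ref{upb} alone does not shrink the window to nothing. You also omit the final step for $m=12,17$: there the inequality holds with equality, which forces $d(a_1)=d(a_2)=\lceil\frac35 m\rceil-1$ via $d(A)=d(a_1)+d(a_2)-d_2(A)$, at which point Lemma~\ref{twodeg} applies. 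That equality-forces-structure step is essential and cannot be waved through.

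Second, your parenthetical that $m=7$ is ``the unique genuinely bad instance (realised by the $5$-edge-on-$7$-vertices example already exhibited after Theorem~\ref{engine})'' is a misreading. That example has $m=5$, not $m=7$, and it witnesses tightness of $\varepsilon=\frac{1}{15}$ in Theorem~\ref{engine}, not failure of the partitioning theorem. In fact $m=7$ does admit a good tripartition -- that is exactly Lemma~\ref{seven}. The point of excluding $m=7$ from Lemma~\ref{rest} is that the generic numerical argument is silent there, not that a counterexample exists; a separate ad hoc argument is needed. Treating $m=7$ as genuinely bad would contradict the theorem the section is proving.
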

\begin{proof}
If $m=1$ or $m=2$, finding such a partition is trivial, so we shall assume $m\ge 3$.

Assume no good partition exists; taking $A,B,C$ to be the partition guaranteed by Corollary \ref{discrete} with $A$, $B$ reordered if necessary so that
$|A|=2$, we shall now consider the two possible cases given by Lemma \ref{upbb}.

\vspace{5pt}
\textbf{Case 1.} $|B|=3$.

Write $A=\{a_1, a_2\}$, $B=\{b_1, b_2, b_3\}$, and for each $i$ let the number of edges meeting $B$ only at $b_i$ be $x_i$.
Now $d(B)=x_1+x_2+x_3+d_2(B)$ and for each $i$,
\begin{displaymath}
\lceil\frac{3}{5}m\rceil-1\ge d(B\setminus\{b_i\})=d_2(B)+x_1+x_2+x_3-x_i.
\end{displaymath}
Since $A$ contains only two vertices, and $B$ three, there are only three possible edges which
do not meet $C$ and contain at most one vertex of $B$, so
\begin{displaymath}
d_2(B)\ge m-d(C)-3 \ge m-\lceil\frac{3m}{5}\rceil-2.
\end{displaymath}
Since $B$ is minimally good,
\begin{eqnarray*}
\lceil\frac{3}{5}m\rceil-1&\ge& d_2(B)+x_2+x_3 \\
&\ge& m-\lceil\frac{3}{5}m\rceil-2+x_2+x_3,
\end{eqnarray*}
so
\begin{displaymath}
x_2+x_3 \le 2\lceil\frac{3}{5}m\rceil+1-m.
\end{displaymath}
Hence one of $x_2$ and $x_3$ (say $x_3$) is at most $\lceil\frac{3m}{5}\rceil-\lfloor\frac{m-1}{2}\rfloor$.
Since $B$ is minimally good, $d(B)-x_3 \le \lceil\frac{3m}{5}\rceil-1$, so
\begin{displaymath}
d(B) \le 2\lceil\frac{3m}{5}\rceil-\lfloor\frac{m-1}{2}\rfloor-1.
\end{displaymath}
Using \eqref{many},
\begin{displaymath}
d(A)+d(B)\ge 4m-4(\lceil\frac{3}{5}m\rceil-1),
\end{displaymath}
so
\begin{equation}
d(A)\ge 4m-6\lceil\frac{3}{5}m\rceil+5+\lceil\frac{m-1}{2}\rceil. \label{qw}
\end{equation}
Comparing this with the upper bound obtained in Lemma \ref{upb}, we must have
\begin{equation}
14\lceil\frac{3}{5}m\rceil\ge10+8m+\lceil\frac{m-1}{2}\rceil, \label{qwe}
\end{equation}
and equality in \eqref{qwe} implies equality in \eqref{qw}. \eqref{qwe} is false for all 
values in the range $3\le m\le 24$ except 7, 12 and 17. For $m=12$ and $m=17$
there is equality in \eqref{qwe} and so also in \eqref{qw}. If $m=12$, therefore, we have
$d(A)=11$. Since $A$ is minimally good, each vertex in $A$ meets at most 7 edges, and so $d_2(A)\ge 3$.
Since $d(A)=d(a_1)+d(a_2)-d_2(A)$, we must have $d(a_1)=d(a_2)=7$, and by Lemma \ref{twodeg} we can find
a good partition. If $m=17$, similarly, we have $d(A)=15$, $d(a_i)\le 10$ and $d_2(A)\ge 5$. Again, this implies
that $d(a_1)=d(a_2)=10$, and by Lemma \ref{twodeg} there is a good partition.

\vspace{5pt}
\textbf{Case 2.} $|B|=2$.

In this case, since there are at most 4 edges contained in $A\cup B$, $d(C)\ge m-4$ and so there exists a good partition if $m\ge 10$. 

Write $B=\{b_1, b_2\}$, and for each $i$ let the number of edges meeting $B$ only at $b_i$ be $x_i$.
Now $d(B)=x_1+x_2+d_2(B)$ and so,
\begin{displaymath}
\lceil\frac{3}{5}m\rceil-1\ge d_2(B)+x_i.
\end{displaymath}
Since $|A|=|B|=2$, there are only two possible edges which
do not meet $C$ and contain at most one vertex of $B$. So
\begin{displaymath}
d_2(B)\ge m-d(C)-2 \ge m-\lceil\frac{3m}{5}\rceil-1.
\end{displaymath}
Since $B$ is minimally good, $d_2(B)+x_2 \le \lceil\frac{3m}{5}\rceil-1$, so
\begin{displaymath}
x_2 \le 2\lceil\frac{3}{5}m\rceil-m.
\end{displaymath}
Since $B$ is minimally good, $d(B)-x_2 \le \lceil\frac{3m}{5}\rceil-1$, so
\begin{displaymath}
d(B) \le 3\lceil\frac{3m}{5}\rceil-m-1
\end{displaymath}
Using \eqref{many},
\begin{displaymath}
d(A)+d(B)\ge 4m-4(\lceil\frac{3}{5}m\rceil-1),
\end{displaymath}
so
\begin{displaymath}
d(A)\ge 5m-7\lceil\frac{3}{5}m\rceil+5.
\end{displaymath}

Comparing this with the upper bound obtained in Lemma \ref{upb}, we must have
\begin{displaymath}
15\lceil\frac{3}{5}m\rceil\ge10+9m,
\end{displaymath}
which holds if and only if $m\equiv 2$ mod 5. Since $3\le m\le 9$, we must have $m=7$.
\end{proof}

\begin{lemma}\label{seven}
If $m=7$, there is a partition of $V$ into three good sets.
\end{lemma}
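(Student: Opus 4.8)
For $m=7$ we have $\lceil\frac{3}{5}m\rceil=5$, so a set is good exactly when it meets at least $5$ of the $7$ edges, i.e.\ when at most $2$ edges lie entirely outside it. Assume for contradiction that $V$ has no partition into three good sets. By Lemma~\ref{degree} we may assume $\Delta(G)\le 4$, and more: no set $S$ with $d_2(S)=0$ is good, so every good set has $d_2\ge 1$ and any two non-adjacent vertices have degree sum at most $4$. Apply Corollary~\ref{discrete}: there is a partition $V=A\cup B\cup C$ with $A,B$ minimal good, $C$ bad, $d(A)+d(B)+d(C)\ge 16$, hence $d(A)+d(B)\ge 12$ and $d(A),d(B)\in\{5,6,7\}$. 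By the earlier lemma we may take $|A|=2$, and by Lemma~\ref{upbb} $|B|\le 3$. Write $A=\{a_1,a_2\}$, let $x_i$ be the number of edges meeting $A$ only at $a_i$ and $y=d_2(A)$; minimality of $A$ gives $x_i=d(a_i)-y\ge d(A)-4\ge 1$ and $d(a_i)\le 4$, so $d(A)\le 8-y$ and therefore $1\le y\le 3$ (the value $y=0$ being excluded since then $d_2(A)=0$ with $A$ good). Similarly $d_2(B)\ge 1$; and if $|B|=3$ then the three analogous inequalities force $d(B)\le 6$ (else $\sum x_i\ge 9>d(B)$), hence $d(A)\ge 6$ and $y\le 2$. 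Finally, since $d(C)\le 4$ at least three edges lie entirely inside $A\cup B$, a set of at most five vertices.

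The plan is a finite case analysis on $|B|\in\{2,3\}$ and on the small parameters $y=d_2(A)$, $d(a_1)$, $d(a_2)$ (and, when $|B|=3$, on $d_2(B)$ and the corresponding counts for $B$). In each case I would first attempt the direct construction: place $a_1$ and $a_2$ in \emph{different} classes, so that the $y$ edges through both of them automatically meet two classes, and then colour the remaining vertices aiming to make every edge rainbow (meeting all three classes), or at worst so that each class is missed by at most two edges. Since only $7$ edges are involved and every edge not through $A$ has all three of its vertices free to colour, this succeeds unless there are many coincidences among the vertices of the seven edges; I would control these coincidences using the fact that at least three edges lie in the at-most-five-vertex set $A\cup B$, which forces $d_2(A)+d_2(B)$ (or the relevant analogue) to be large and, together with $d(a_i)\le 4$ and minimality of $A,B$, cuts the possibilities down to a short explicit list of configurations of the seven edges. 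Each such configuration is then dispatched by exhibiting a good partition; in the tightest sub-cases — e.g.\ $y=3$, where necessarily $d(a_1)=d(a_2)=4$, $x_1=x_2=1$, $d(A)=5$ — I expect to use a $2+2+3$-vertex argument of the kind already used for $m=12,17$ in the proof of Lemma~\ref{rest}, adapted to $m=7$ since Lemma~\ref{twodeg} itself requires $m\ge 10$.

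The main obstacle is that the clean tool breaks here: restricting $G$ to $V\setminus A$ (legitimate, as $|A|=2$) gives a multigraph with $7-y$ edges of size at least $2$ and $y$ edges of size $1$, and Theorem~\ref{multi} then only yields a bipartition of $V\setminus A$ into parts each meeting $\big\lceil \tfrac{2(7-y)}{3}+\tfrac{y-1}{2}\big\rceil$ edges, which equals $5$ when $y=0$ but only $4$ as soon as $y\ge 1$; the integrality slack that saved the larger cases is absent. Consequently the sub-cases with $d_2(A)\ge 1$ — and, when $|B|=2$, symmetrically those with $d_2(B)\ge 1$ — cannot be closed by restriction and must be handled by the hand construction above. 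Keeping that case analysis organised, and in particular making sure that every coincidence pattern among the vertices of the seven edges is accounted for, is where the real work lies.
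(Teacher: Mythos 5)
Your preliminary reductions are all sound --- the translation of ``good'' into ``misses at most $2$ edges,'' the use of Lemma~\ref{degree} to get $\Delta(G)\le4$ and $d_2(A),d_2(B)\ge1$, the application of Corollary~\ref{discrete} to obtain $d(A)+d(B)\ge 12$, the deduction $|A|=2$, $|B|\le3$, and the arithmetic $x_i=d(a_i)-d_2(A)\ge d(A)-4$ leading to $d(A)\le 8-d_2(A)$ and $1\le d_2(A)\le 3$. Your observation that Theorem~\ref{multi} loses its integrality slack once $d_2(A)\ge1$, so that restriction cannot finish here, is also correct and is exactly why the paper switches to a direct construction.

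The problem is that what follows is an outline, not a proof. You announce a case split on $|B|$, $d_2(A)$, $d(a_1)$, $d(a_2)$, etc., and say you ``would attempt the direct construction,'' ``expect to use a $2+2+3$-vertex argument,'' and that ``keeping that case analysis organised \ldots is where the real work lies.'' None of the cases is actually closed: you do not exhibit a single good tripartition, nor do you reduce the configurations of the seven edges to a finite list that you then verify. Since you acknowledge you cannot invoke Lemma~\ref{twodeg} ($m\ge10$) and must ``adapt'' it, that adaptation also needs to be carried out, not just promised. As it stands, the gap is exactly the part of the proof that requires an argument.

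For comparison, the paper closes the case analysis much more cheaply by reordering so $d(A)\ge d(B)$; then $d(A)+d(B)\ge12$ forces either $d(A)=7$ or $d(A)=d(B)=6$. In each of these two cases $\Delta\le4$ and minimality pin down $|A|=2$, $d(a_1)=d(a_2)=4$, and the exact value of $d_2(A)$ ($1$ in the first case, $2$ in the second), from which an explicit good tripartition is constructed in a few lines, with a short counting argument (using $\Delta\le4$ to find a sixth vertex of positive degree) closing the second case. Keying the split to $d(A)$ rather than to $(|B|,d_2(A),d(a_1),d(a_2),\ldots)$ is what collapses the case analysis from a large, unmanaged enumeration to two clean subcases; I'd recommend you redo the split that way, and then actually exhibit the partitions.
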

\begin{proof} 
Take $A,B,C$ to be the partition guaranteed by Corollary \ref{discrete}, reordering $A$, $B$ if necessary so that $d(A)\ge d(B)$. 
By \eqref{many}, $d(A)+d(B) \ge 12$, so either $d(A)=7$ or $d(A)=d(B)=6$.

\vspace{5pt}
\textbf{Case 1.} $d(A)=7$.
Since $A$ is minimally good, each vertex in $A$ meets at least three edges which do not contain any other vertex in $A$. 
Also, by Lemma \ref{degree}, some edge meets more than one vertex in $A$. Therefore $|A|=2$ and each vertex in $A$ meets 
exactly three edges which do not contain the other, and one edge which does.

Thus, writing $A=\{a,b\}$, $d(a)=d(b)=4$ and there is exactly one edge which contains both, $abc$, say. Choose any edge which 
does not contain $a$; this edge contains some vertex $d\neq b,c$. Choose any edge which does not contain $b$ and is not 
$acd$; this edge contains some vertex $e\neq a,c,d$. Now $\{a,d\}$ and $\{b,e\}$ are both good sets. 
Since neither $abd$ nor $abe$ is an edge of $G$, at most two edges do not meet $V\setminus \{a,b,d,e\}$, so this is also good 
and we have a good partition.

\vspace{5pt}
\textbf{Case 2.} $d(A)=d(B)=6$.
Relabel if necessary so that $d_2(A)\ge d_2(B)$. Since $d(C)\le 4$, and $d(C)\ge m-d_2(A)-d_2(B)$, $d_2(A)\ge 2$.
Since $A$ is minimally good, each vertex in $A$ meets at least two edges which do not contain any other vertex in $A$.
Therefore $|A|=2$ and each vertex in $A$ meets exactly two edges which do not contain the other, and two edges which do.

Thus, writing $A=\{a,b\}$, $d(a)=d(b)=4$ and there are exactly two edge which contains both, $abc$ and $abd$, say. 
There is one edge which meets neither $a$ nor $b$; this edge contains some vertex $e\neq c,d$. 
Since, by Lemma \ref{degree}, $\Delta(G)\le 4$, there must be at least 6 vertices of degree at least 1, so there exists
a vertex $f\neq a,b,c,d,e$ which meets an edge. Since $abf$ is not an edge of $G$, either there is an edge which meets 
$f$ but not $a$, or there exists an edge which meets $f$ but not $b$; assume without loss of generality the former.
Now $\{a,f\}$ and $\{b,e\}$ are both good sets. Since neither $abd$ nor $abe$ is an edge of $G$, at most two edges do 
not meet $V\setminus \{a,b,d,e\}$, so this is also good and we have a good partition.
\end{proof}

Corollary \ref{large}, together with Lemmas \ref{seven} and \ref{rest} give our main result.

\begin{theorem}
Let $G$ be a 3-uniform hypergraph with $m$ edges. 
Then there exists a partition into three parts with each
part meeting at least $\frac{3}{5}m$ edges.
\end{theorem}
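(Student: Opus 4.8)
The plan is simply to assemble the case analysis that the preceding results have already carried out, splitting on the value of $m$. Since $G$ is a 3-uniform hypergraph, $m$ is a non-negative integer, and the earlier statements between them cover every such value: Corollary \ref{large} handles all $m\ge 25$, Lemma \ref{rest} handles all $m\le 24$ with $m\ne 7$ (the cases $m\le 2$ being trivial and subsumed there), and Lemma \ref{seven} disposes of the single remaining value $m=7$.

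Concretely, I would argue as follows. If $m\ge 25$, apply Corollary \ref{large} to obtain a partition of $V$ into three good sets, i.e.\ three parts each meeting at least $\frac{3}{5}m$ edges. If $m\le 24$ and $m\ne 7$, the required partition is furnished by Lemma \ref{rest}. If $m=7$, it is furnished by Lemma \ref{seven}. These three ranges exhaust all possibilities for $m$, so in every case $V$ admits a partition into three parts each meeting at least $\frac{3}{5}m$ edges, which is exactly the assertion of the theorem.

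I do not expect any genuine obstacle at this final stage: the statement is a one-line bookkeeping step once Corollary \ref{large} and Lemmas \ref{rest} and \ref{seven} are in hand. The real work lies upstream—in Theorem \ref{engine} (whose output is the partition with two minimal $\varepsilon$-good parts and $d(A,B,C)>(2+3\varepsilon)m$), in the multigraph partitioning result Theorem \ref{multi}, and in the careful integer-sensitive size bounds of Lemmas \ref{upb} and \ref{upbb} that reduce the large-$m$ case to $m\ge 25$ and isolate the finitely many small hypergraphs needing an ad hoc check. So the only thing to be careful about here is to confirm that the three cited cases really do partition the range of $m$ with no gaps or overlaps, which they do.
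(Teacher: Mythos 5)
Your proposal matches the paper's proof exactly: the theorem is stated as an immediate consequence of Corollary~\ref{large} ($m\ge 25$), Lemma~\ref{rest} ($m\le 24$, $m\ne 7$), and Lemma~\ref{seven} ($m=7$), and you correctly observe that these three cases exhaust all non-negative integers $m$. No gaps; this is the intended one-line bookkeeping step.
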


\section{Final Remarks}
Two questions naturally arise from these results. Firstly, for $r=3$, can we acheive a better bound than $c=\frac{3}{5}$ for sufficiently large $m$?
It seems very likely that such a result is true; indeed, by analogy with the results of Bollob\'as and Scott in the graph case \cite{BS99},
we might expect there exist partitions with each part meeting at least $(\frac{19}{27}-o(1))m$ edges (considering complete hypergraphs 
shows that we cannot do better). However, we cannot hope to do better than $(\frac{2}{3}-o(1))m$ using these methods. 
The immediate difficulty in doing better than $\frac{3}{5}m$ is the need to find an optimal partition with at most one bad part in the 
proof of Theorem \ref{engine}, 

The second question is whether we can say anything for $r>3$. Again, the need to find a starting partition with at most one bad part is likely
to become much more difficult as the number of parts increases. A substantially different approach may well be needed to get a bound which 
remains good as $r$ becomes large.

\end{document}